\title{Generalized Picard-Vessiot extensions and differential Galois cohomology}
\date{\today}
\author{Zo\'e Chatzidakis\thanks{partially supported  by ValCoMo
    (ANR-13-BS01-0006)}{ (CNRS 
    - ENS (UMR 8553))}\and Anand Pillay\thanks{Partially supported by NSF grant DMS-1360702}\ (Notre Dame)}
\newtheorem{Theorem}{Theorem}[section]
\newtheorem{Proposition}[Theorem]{Proposition}
\newtheorem{Definition}[Theorem]{Definition} 
\newtheorem{Remark}[Theorem]{Remark}
\newtheorem{Lemma}[Theorem]{Lemma}
\newtheorem{Corollary}[Theorem]{Corollary}
\newtheorem{Fact}[Theorem]{Fact}
\newtheorem{Question}[Theorem]{Question}
\newcommand{\calu}{{\mathcal U}}
\newcommand{\si}{\sigma}
\begin{document}
\maketitle

\begin{abstract} 
In \cite{Pi2} it was proved that if a differential field $(K,\delta)$ of
charaacteristic $0$ is
algebraically closed and closed under Picard-Vessiot extensions then
every differential algebraic $PHS$ over $K$ for a linear differential
algebraic group $G$ over $K$ has a $K$-rational point (in fact if and
only if).  This paper explores whether and if so, how,  this can be
extended  to (a) several commuting derivations, (b) one automorphism.  Under a
natural notion of ``generalized Picard-Vessiot extension'' (in the case
of several derivations), we give a counterexample.  We also have a
counterexample in the case of one automorphism. We also formulate and
prove some positive statements in the case of several derivations. 

On a montr\'e dans \cite{Pi2} que si un corps diff\'erentiel $(K,\delta)$ de
caract\'eristique $0$
est alg\'ebriquement clos et clos par extensions de Picard-Vessiot,
alors tout espace principal homog\`ene diff\'erentiel alg\'ebrique sur
$K$ a un point $K$-rationnel (et r\'eciproquement). Cet article explore
s'il est possible, et si oui comment, d'\'etendre ce r\'esultat au cas de
(a) plusieurs d\'erivations qui commutent, (b) un automorphisme. Pour
une notion naturelle d'``extension de Picard-Vessiot g\'en\'eralis\'ee''
(dans le cas de plusieurs d\'erivations) nous donnons un
contre-exemple. Nous avons aussi un contre-exemple dans le cas d'un
automorphisme. Enfin, nous formulons et d\'emontrons quelques r\'esultats
positifs dans le cas de plusieurs d\'erivations. 
\end{abstract}

\section{Introduction}
This paper deals mainly with differential fields $(K,\Delta)$ of
characteristic $0$, where $\Delta=\{\delta_1,\ldots,\delta_m\}$ is a set
of commuting derivations on $K$. When $m=1$, the second author showed in
\cite{Pi2} that for a differential field $(K, \delta)$ of characteristic
$0$, the following two conditions are
equivalent: \\
(i) $K$ is algebraically closed and has no proper Picard-Vessiot
extension.\\
(ii) $H^1_{\delta}(K,G)=\{1\}$ for every linear differential algebraic
group $G$ over $K$. (Equivalently, every differential algebraic
principal homogeneous space $X$ for $G$, defined over $K$, has a
$K$-point.)\\[0.05in]
It is natural to ask whether this result generalizes to the case when $m > 1$.  One issue is what are the ``correct" analogues of conditions (i) and (ii) for
several commuting derivations. Condition (ii) is exactly the same and is not controversial. However concerning (i), the Picard-Vessiot theory is a ``finite-dimensional" theory, namely deals with systems of linear equations where the solution set is finite-dimensional, namely has $\Delta$-type $0$. So at the minimum we should include the parametrized Picard-Vessiot ($PPV$) extensions of Cassidy and Singer \cite{CS}.  One of the main points of this paper is to formulate an appropriate notion of ``generalized Picard-Vessiot extension". This, and some variants,  is carried out in Section 3, where we also adapt cohomological arguments of Kolchin. In any case our generalized PV theory
will be a special case of the ``generalized strongly normal theory" from \cite{Pi1} and \cite{Leon-Sanchez-Pillay}, but  still properly include  Landesman's theory \cite{Landesman} (and the so-called parameterized Picard-Vessiot  theory).  So in the case of $m>1$ condition (i) will be replaced by  ``$K$ is algebraically closed and has no proper generalized Picard-Vessiot extensions", (in fact something slightly stronger)   Even with this rather inclusive condition, the equivalence with (ii) will fail, basically due to the existence of proper definable subgroups of the additive group which are orthogonal to all proper definable subfields.  This is carried out in Section 4. In the same section we will give a  positive result (in several derivations) around triviality of $H^{1}_{\Delta}$ and closure under ``generalized strongly normal extensions of linear type".  We will also mention in Section 4, a recent result of Minchenko and Ovchinnikov \cite{MO}, written after the current paper was submitted for publication, which says in effect that definable subgroups of the additive group present the {\em only}  obstruction to generalizing \cite{Pi2} 
to the case $m>1$.

In Section 5, we investigate the context of Picard-Vessiot extensions of  
difference fields (in arbitrary characteristic), and show  that the analogous statement to (i) implies (ii) fails.
hold. Here the problem arises from the existence of proper definable
subgroups of the multiplicative group which are orthogonal to the fixed field. 

\vspace{5mm}
\noindent
The second author would like to thank Michael Singer for several discussions, especially  around the notions of generalized Picard-Vessiot extensions.

\section{Preliminaries}
Our model-theoretic notation is standard. Unless we say otherwise, we work in a saturated model model of a given complete theory. 
The reader is referred to \cite{Tent-Ziegler} for general model theory, stability and simplicity, \cite{Pillay-book} for more on stability, and \cite{Marker} for basics of the model theory of differential fields. 

We will be somewhat freer in our use of model-theoretic notions in this paper compared with say \cite{Pi2}.  The notion of (weak) orthogonality will be important.
Recall that complete types $p(x)$, $q(y)$ over a set $A$  are {\em weakly orthogonal} if $p(x)\cup q(y)$ extends to a unique complete type $r(x,y)$ over $A$.
If $T$ is a simple theory, and complete types $p(x), q(y)$ are over a set $A$, then $p(x)$ and $q(y)$ are said to be {\em almost orthogonal} if whenever $a$ realizes $p$ and $b$ realizes $q$ then $a$ and $b$ are independent over $A$ (in the sense of nonforking). Likewise in the simple environment, $p$ and $q$ (over $A$ again) are said to be {\em orthogonal} if for all $B\supseteq A$ and nonforking extensions $p', q'$ of $p, q$ over $B$, $p'$ and $q'$ are almost orthogonal. 
For $T$ simple, and $p(x), q(y)\in S(A)$ then weak orthogonality of $p$ and $q$ implies almost orthogonality, and conversely almost orthogonality of $p$ and $q$ implies weak orthogonality if at least one of $p$, $q$ is stationary. 

\begin{Remark} Suppose $T$ is stable, $p(x)\in S(A)$, and $\phi(y)$ is a formula over $A$. Then the following are equivalent:
\newline
(i) $p(x)$ and $tp(b/A)$ are weakly orthogonal for all tuples $b$ of realizations of $\phi(y)$.
\newline
(ii) For some (any) realization $c$ of $p$, and any tuple $b$ of realizations of $\phi$, $dcl^{eq}(Ac)\cap dcl^{eq}(Ab) = dcl^{eq}(A)$. 
\end{Remark} 
\begin{proof}  This is well-known, but also proved in Lemma 2.2 (i) of \cite{Leon-Sanchez-Pillay}.

\end{proof}

Another piece of notation we will use is the following: Let  $X$ be a definable set, and $A$ some set of parameters over which $X$ is defined. Then $X_{A}^{eq}$ denotes $dcl^{eq}(X,A)$ and can also be understood as the collection of classes of tuples from $X$ modulo $A$-definable equivalence relations $E$ (as $E$ varies). 

\vspace{5mm}
\noindent
We now pass to differential fields. 
Let $\mathcal U$ be a field of characteristic $0$ with
a set $\Delta=\{\delta_1,\ldots,\delta_m\}$ of $m$ 
  commuting derivations, $m>1$, which is differentially closed.  We assume that $\mathcal U$ is
  sufficiently saturated. We let $\mathcal C$  be the field of absolute
  constants (namely the solution set of $\delta_1(x)=\cdots=\delta_m(x)=0$). We refer to \cite{DAAG}
  and \cite{DAG} for definitions and results in differential algebra. \\
Recall that the theory of $\mathcal U$, denoted DCF$_{0,m}$ is
$\omega$-stable, of  U-rank $\omega^m$. It eliminates quantifiers
and imaginaries, and the definable closure $dcl(A)$ of a subset $A$ of $\mathcal U$ is the
differential field generated by $A$, its algebraic closure $acl(A)$ is
the field-theoretic algebraic closure of $dcl(A)$. Independence is
given by ordinary algebraic independence of the algebraic
closures. See \cite{McG} for proofs. \\
By a differential (or $\Delta$-) subfield of $\mathcal U$ we  mean a subfield closed under the $\delta_{i}$'s. 
If $K$ is a differential subfield of $\mathcal U$ and $a$ a tuple in
  $\mathcal U$, then $K\langle a\rangle $ 
  denotes the differential field
generated by $a$ over $K$. If $L$ is a subfield of $\mathcal U$, then $L^{alg}$
denotes the  algebraic closure of $L$ in $\mathcal U$. \\
If $K$ is a differential subfield of $\mathcal U$, we denote by $K\Delta$ the
Lie algebra of differential  operators defined over $K$, i.e.,
derivations of the form
$D=\sum_{i=1}^ma_i\delta_i$, where the $a_i\in K$. If $y_1,\ldots,y_n$
are indeterminates, then $K\{y_1,\ldots,y_n\}$ (or $K\{y_1,\ldots,y_n\}_\Delta$)  denotes the ring of
$\Delta$-polynomials in the variables $y_1,\ldots,y_n$. 

\begin{Fact} Let  $K$ be  a  $\Delta$-subfield of 
  $\mathcal U$. 
\begin{enumerate}
\item (0.8.13 in \cite{DAG}) (Sit) Let ${\mathfrak A}$ be a perfect $\Delta$-ideal of the
  $\Delta$-algebra $K\{y\}$. A necessary and sufficient condition
  that the set of zeroes ${\mathfrak Z}({\mathfrak A})$ of ${\mathfrak A}$ be a subring of $\mathcal U$, is
  that there exist a vector subspace and Lie subring $\mathcal D$ of
  $K\Delta$ such that ${\mathfrak A}=[{\mathcal D} y]$ (the $\Delta$-ideal generated by
  all 
  $Dy$,  $D\in {\mathcal D}$). When this is the case,
  there exists a commuting linearly independent subset $\Delta'$ of
  $K\Delta$ such that ${\mathfrak Z}({\mathfrak A})$ is the field of absolute constants of the
  $\Delta'$-field $\mathcal U$.



\item (0.5.7 of \cite{DAG}) Recall that a {\em linear $\Delta$-ideal} is
  a $\Delta$-ideal which is generated by homogenous linear
  $\Delta$-polynomials. Let $K$ satisfy the following condition: whenever ${\mathfrak p}$ is a
  linear $\Delta$-ideal of $K\{y_1,\ldots,y_m\}$ with ${\mathfrak p}\cap
  K[y_1,\ldots,y_m]=(0)$, and $$0\neq D\in K[y_1,\ldots,y_m]$$ is
  homogeneous and linear, then ${\mathfrak p}$ has a zero in $K^m$ that is
  not a zero of $D$.  Then every commuting linearly independent subset
  of $K\Delta$ is a subset of a commuting basis of
  $K\Delta$. This in particular happens when $K$ is
  constrained closed. 
\end{enumerate}
\end{Fact}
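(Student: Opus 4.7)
The plan is an induction on $k$: given commuting $K$-linearly independent $D_1,\ldots,D_k \in K\Delta$ with $k<m$, I produce some $D_{k+1} \in K\Delta$ commuting with each $D_j$ and $K$-linearly independent from them. Writing $D_j=\sum_i b_{ji}\delta_i$ and searching for $D_{k+1}=\sum_i y_i\delta_i$, a Leibniz computation (using $[\delta_i,\delta_\ell]=0$) yields $[D_{k+1},D_j]=\sum_i(D_{k+1}(b_{ji})-D_j(y_i))\delta_i$, so the commutation conditions $[D_{k+1},D_j]=0$ form a system of homogeneous linear $\Delta$-polynomial equations over $K$ in $y_1,\ldots,y_m$. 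Let $\mathfrak{p}$ be the linear $\Delta$-ideal they generate in $K\{y_1,\ldots,y_m\}$.

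To invoke the hypothesis I need (a) a nonzero homogeneous linear $D\in K[y_1,\ldots,y_m]$ that will separate the sought solution from the $K$-span $W$ of the rows $(b_{j1},\ldots,b_{jm})$, and (b) $\mathfrak{p}\cap K[y_1,\ldots,y_m]=(0)$. Item (a) is easy: since $\dim_K W=k<m$, any nonzero $K$-linear functional vanishing on $W$ works, and a solution $(a_i)\in K^m$ of $\mathfrak{p}$ with $D(a_i)\neq 0$ is automatically linearly independent of $D_1,\ldots,D_k$. For (b), the main point, it suffices to show $\mathfrak{Z}(\mathfrak{p})\subseteq\mathcal{U}^m$ is Zariski dense. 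Under $\mathcal{U}\Delta\cong\mathcal{U}^m$, $\mathfrak{Z}(\mathfrak{p})$ is the centralizer $Z$ of $\{D_1,\ldots,D_k\}$ in $\mathcal{U}\Delta$. A quick Leibniz check, $[cD,D_j]=c[D,D_j]-D_j(c)D$, shows that $Z$ is stable under multiplication by $\mathcal{C}':=\bigcap_{j\leq k}\ker D_j$, so $Z$ is a $\mathcal{C}'$-subspace of $\mathcal{U}^m$. By Sit's theorem (part 1 above, applied to $\mathcal{U}$), $\mathcal{C}'$ is the field of absolute constants of a commuting linearly independent $\Delta'\subseteq\mathcal{U}\Delta$ of cardinality $k$, hence algebraically closed in $\mathcal{U}$ and Zariski dense. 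Over $\mathcal{U}$ (differentially closed), $\{D_1,\ldots,D_k\}$ extends to a commuting $\mathcal{U}$-basis $\widetilde{D}_1,\ldots,\widetilde{D}_m$ of $\mathcal{U}\Delta$, exhibiting $Z$ as an $m$-dimensional $\mathcal{C}'$-subspace whose $\mathcal{U}$-linear span is all of $\mathcal{U}\Delta$. Since a $\mathcal{C}'$-subspace is Zariski dense in its $\mathcal{U}$-linear span (as $\mathcal{C}'$ is Zariski dense in $\mathcal{U}$), $\mathfrak{Z}(\mathfrak{p})$ is Zariski dense in $\mathcal{U}^m$, giving (b).

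With (a) and (b), the hypothesis produces $(a_1,\ldots,a_m)\in K^m\cap\mathfrak{Z}(\mathfrak{p})$ with $D(a_i)\neq 0$, and $D_{k+1}:=\sum_i a_i\delta_i$ completes the induction. For the parenthetical claim when $K$ is constrained closed: given any $\mathfrak{p}$ and $D$ as in the hypothesis, the same Zariski density argument shows $\mathfrak{Z}(\mathfrak{p})\not\subseteq\{D=0\}$ in $\mathcal{U}^m$, so the locus $\mathfrak{Z}(\mathfrak{p})\cap\{D\neq 0\}$ is a nonempty $K$-constrainable subset of $\mathcal{U}^m$, and constrained closedness of $K$ supplies the required $K$-point. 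The main obstacle is the circular-looking use, inside (b), of extending commuting linearly independent subsets of $\mathcal{U}\Delta$ to a commuting $\mathcal{U}$-basis, which is formally the statement being proved, with $K$ replaced by $\mathcal{U}$. The circularity is only apparent: $\mathcal{U}$ satisfies the hypothesis of part 2 trivially by its saturation and differential closedness, so the inductive argument first runs over $\mathcal{U}$ without reference to the general case, and is then re-used as above.
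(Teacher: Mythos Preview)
The paper does not prove this Fact; both parts are quoted, with references 0.8.13 and 0.5.7 of Kolchin's \cite{DAG}, and no argument is supplied. So there is no ``paper's own proof'' to compare your proposal against. You are attempting to reconstruct Kolchin's proof of part~2 (you do not address part~1 at all, beyond invoking it).

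Your setup is correct: the bracket formula, the identification of $\mathfrak Z(\mathfrak p)$ with the centralizer $Z$ of $\{D_1,\dots,D_k\}$ in $\mathcal U\Delta$, and the $\mathcal C'$-module structure on $Z$ are all fine. The genuine gap is the one you yourself flag, and your resolution of it does not work. To run the same induction over $\mathcal U$ you must, at each step, verify condition~(b) for the commutation ideal; but your only route to~(b) is Zariski density of $Z$, which you obtain by first extending $\{D_1,\dots,D_k\}$ to a commuting $\mathcal U$-basis---exactly the statement being proved with $K=\mathcal U$. Saying that $\mathcal U$ satisfies the \emph{hypothesis} of part~2 is no help: that hypothesis is a conditional with antecedent~(b), so it cannot be invoked until~(b) is established independently. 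Put differently, over $\mathcal U$ your entire induction step collapses to the bare assertion that the centralizer $Z$ is not contained in the $\mathcal U$-span of $D_1,\dots,D_k$, and nothing in your argument proves this without assuming the conclusion. An independent argument is needed here, and this is precisely where Kolchin's proof in \cite{DAG} does real work.

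There is a second gap in your treatment of the constrained-closed claim. The phrase ``the same Zariski density argument shows $\mathfrak Z(\mathfrak p)\not\subseteq\{D=0\}$'' is unjustified: your density argument was specific to the commutation ideal, exploiting the centralizer description of its zero set; it says nothing about an arbitrary linear $\Delta$-ideal $\mathfrak p$ with $\mathfrak p\cap K[y_1,\dots,y_m]=(0)$. For that general statement one needs a direct analysis of linear $\Delta$-ideals (again, see \cite{DAG}, 0.5.7) that you have not supplied.
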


\begin{Corollary}
\begin{enumerate}
\item  If $K$ is a differential subfield of $\mathcal U$ then the proper $K$-definable subfields of $\mathcal U$ are precisely the common zero sets of (finite) subsets of $K\Delta$.
\item  The definable subfields of $\mathcal U$ have $U$-rank of the form $\omega^{d}$ for some  $0\leq d \leq m$. 

\end{enumerate}
\end{Corollary}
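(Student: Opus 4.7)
My plan for (1) is to reduce to Fact 1.1. The direction ($\Leftarrow$) is essentially immediate: given $\mathcal D \subseteq K\Delta$, replacing it by the vector subspace and Lie subring it generates does not change the common zero set, and Fact 1.1(1) then identifies that zero set with the field of absolute $\Delta'$-constants for some commuting linearly independent $\Delta' \subseteq K\Delta$, a proper $K$-definable subfield as soon as $\mathcal D$ contains a nonzero derivation.

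For ($\Rightarrow$), let $L$ be a proper $K$-definable subfield and set $\bar L = \mathfrak Z(\mathfrak A)$, where $\mathfrak A \subseteq K\{y\}$ is the perfect $\Delta$-ideal of $\Delta$-polynomials vanishing on $L$ (so $\bar L$ is the Kolchin closure of $L$ over $K$). A routine Kolchin-continuity argument shows $\bar L$ is a subring: for fixed $a \in L$ the set $\{b : a + b \in \bar L\}$ is Kolchin-closed and contains $L$, hence contains $\bar L$, and iterating this kind of argument in both variables for addition and for multiplication gives closure under the ring operations. Fact 1.1(1) then gives $\mathfrak A = [\mathcal D y]$ for some Lie subring $\mathcal D \subseteq K\Delta$, so $\bar L$ is exactly the common zero set of $\mathcal D$ and matches the form in the statement.

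The main step, and the only real obstacle, is to show $L = \bar L$. For this I would use a standard stability-theoretic argument. Since $[\mathcal D y]$ is a prime $\Delta$-ideal (after extending $\Delta'$ to a commuting basis of $\mathcal U\Delta$ using Fact 1.1(2) over $\mathcal U$, the quotient $\mathcal U\{y\}/[\mathcal D y]$ is a polynomial ring in the independent derivatives of $y$), $\bar L$ is Kolchin-irreducible with a unique generic type $p$ over $K$. For any finite $f_1,\ldots,f_n \in K\{y\} \setminus [\mathcal D y]$, primality gives $f_1\cdots f_n \notin [\mathcal D y]$, so $\mathfrak Z(f_1\cdots f_n) \cap \bar L$ is a proper Kolchin-closed subset of $\bar L$ and, by Kolchin-density of $L$, does not contain $L$; hence there is some $a \in L$ with all $f_i(a) \neq 0$. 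Saturation of $\mathcal U$ then produces $a \in L$ realizing $p$, and since $L$ is $K$-definable and $p$ is complete over $K$, every realization of $p$ lies in $L$. Finally, for any $b \in \bar L$ pick $a \in \bar L$ realizing $p \mid Kb$; translation by $b \in \bar L$ is a $Kb$-definable bijection of the connected group $(\bar L, +)$ preserving its unique generic, so $a+b$ also realizes $p$. Hence $a, a+b \in L$ and $b = (a+b) - a \in L$, proving $L = \bar L$.

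Part (2) is then a short corollary: by (1), any proper $K$-definable subfield $L$ is the field of absolute $\Delta'$-constants for some commuting linearly independent $\Delta' \subseteq K\Delta$ of some cardinality $d \geq 1$; equipped with the $m-d$ commuting derivations complementary to $\Delta'$, $L$ is itself a differentially closed field in $m-d$ derivations and so has $U$-rank $\omega^{m-d}$. Combined with the case $L = \mathcal U$, of $U$-rank $\omega^m$, this yields $U$-rank exactly $\omega^e$ for some $0 \leq e \leq m$, as claimed.
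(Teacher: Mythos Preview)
Your proof is correct and follows essentially the same route as the paper. For part (1) the paper simply says ``is clear'', and what you have written is a careful unpacking of that: the Kolchin closure $\bar L$ is a subring, Sit's result (Fact~2.2(1)) applies, and your generic-type argument that $L=\bar L$ is the standard stable-group version of the elementary fact that a definable subgroup of $({\mathcal U},+)$ containing a Kolchin-dense open of its closure must equal that closure. For part (2) your argument is the same as the paper's, but the paper is a bit more explicit on one point you glide over: the ``complementary'' commuting derivations to $\Delta'$ are obtained by applying Fact~2.2(2) with ${\mathcal U}$ in place of $K$ (since ${\mathcal U}$ is constrained closed), so they live in ${\mathcal U}\Delta$ rather than $K\Delta$, and then the two structures $({\mathcal U},\delta_1,\ldots,\delta_m)$ and $({\mathcal U},D_1,\ldots,D_m)$ are interdefinable, which is what lets you identify the $U$-rank of $L$ computed inside $DCF_{0,m-d}$ with its $U$-rank as a definable set in the original structure.
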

\begin{proof} 1. is clear. For 2.  Let $C$ be a definable subfield of $\mathcal U$. By Fact 2.1 (2), we can find a commuting basis $D_{1},\ldots,D_{m}$ of ${\mathcal U}\Delta$, such that $C$ is the $0$-set of $\{D_{1},\ldots,D_{r}\}$.  But then $({\mathcal U}, +, \times, D_{1},\ldots,D_{m})$ is a model of $DCF_{0,m}$, so as is well-know, in this structure $C$ has $U$-rank $\omega^{m-r}$, so the same is true in ${\mathcal U}$ equipped with the original derivations $\delta_{1},\ldots,\delta_{m}$ (as the two structures are interdefinable).

\end{proof}

\begin{Question} Let $C$ be a $K$-definable subfield of $\mathcal U$. Consider the structure $M$ which has universe $C$ and predicates for all subsets of $C^{n}$ which are defined over $K$ in $\mathcal U$. Does $M$ have elimination of imaginaries?  We know this is the case working over a larger field $K_{1}$, which is enough to witness the interdefinability of $\{\delta_{1},\ldots,\delta_{m}\}$ with the $\{D_{1},\ldots,D_{m}\}$ from the proof of Corollary 1.3 (2). 
\end{Question} 

\begin{Remark} 
\begin{enumerate}
\item  We sometimes call a proper definable subfield of $\mathcal U$ a ``field of constants".
\item Let $F$ be a $K$-definable field. Then $F$ is $K$-definably isomorphic to $\mathcal U$ or to a field of constants. (see \cite{Su2}).

\end{enumerate}
\end{Remark}

\vspace{5mm}
\noindent
In this paper cohomology  will appear in several places. 
Firstly, we have the so-called constrained cohomology set $H^{1}_{\Delta}(K,G)$ where $K$ is a differential subfield of $\mathcal U$, and $G$ is a differential algebraic group over $K$.  This is introduced in Kolchin's second book \cite{DAG}.  $H^{1}_{\Delta}(K,G)$ can be {\em defined} as the set of differential algebraic principal homogeneous spaces over $K$ for $G$, up to differential rational (over $K$) isomorphism.  In \cite{DAG} it is also described  in terms of suitable cocycles from $Aut_{\Delta}(K^{\rm diff}/K)$ to $G(K^{\rm diff})$. Here  $K^{\rm diff}$ is the differential closure of $K$. This is discussed in some detail in the introduction to \cite{Pi2}, Compatibilities with the category of definable groups and $PHS$'s are discussed in the introduction to \cite{Pi2}. See  Fact 1.5(ii) in particular. 

Secondly we have  a  related but distinct theory appearing in Kolchin's earlier book \cite{DAAG}, Chapter VII, Section 8,  which he calls differential Galois cohomology.  This concerns suitable cocycles from the ``Galois group" of a strongly normal extension $L$ of a differential field to $G$ where $G$ is an algebraic group defined over the (absolute) constants of $K$.   In Chapter V of \cite{DAAG} Kolchin also discusses purely algebraic-geometric cohomology theories (although in his own special language), namely Galois cohomology $H^{1}(K,G)$ for $K$ a field and $G$ an algebraic group over $K$  (as in \cite{Serre}) and what he calls $K$-cohomology, $H^{1}_{K}(W,G)$ for $W$ a variety over $K$ and $G$ an algebraic group over $K$.  The interaction between these three cohomology theories is studied  in Chapter V and VI (sections 9 and 10) of \cite{DAAG} and plays an important role in Kolchin's description of strongly normal extensions in terms of so-called $G$-primitives and $V$-primitives.  Generalizing and adapting these notions and work of Kolchin  to a more general Galois theory of differential fields will be the content of the proof of Proposition 3.8 below.

In the title of the current paper we use the expression ``differential Galois cohomology" to refer both to differential Galois cohomology in the sense of \cite{DAAG} and constrained cohomology in the sense of \cite{DAG}.

\section{Generalized Picard-Vessiot extensions and variants}
We are still working in the context of a saturated differentially closed field $\mathcal U$ with respect to the set $\Delta = \{\delta_{1},\ldots,\delta_{m}\}$ of derivations. 
We first recall a definition from \cite{Leon-Sanchez-Pillay} (Definition 3.3) which is itself a slight elaboration on a notion from \cite{Pi1}. 

\begin{Definition} Let $K$ be a (small) subfield of $\mathcal U$, $X$
  some $K$-definable set, and $L$ a differential field extension of $K$
  which is finitely generated over $K$ (as a differential field).  $L$
  is said to be an $X$-strongly normal extension of $K$ if (i) for any
  $\sigma\in Aut({\mathcal U}/K)$, $\sigma(L)\subseteq L\langle
  X\rangle$, and (ii) $K\langle X\rangle \cap L = K$.
\end{Definition}

\begin{Remark} In the context of of Definition 3.1, let $L = K\langle
  b\rangle$ and let $q = tp(b/K)$. Then (i) says that for any
  realization  $b_{1}$ of $q$, $b_{1}\in dcl(K,b, X)$, and (ii) says
  that $q$ is weakly orthogonal to $tp(a/K)$ for any tuple $a$ of elements of $X$. (See Remark 2.1.)   Moreover, in this situation (i.e  when  (i) and (ii) hold), the type $q$ is isolated (see Lemma 2.2 (ii) of \cite{Leon-Sanchez-Pillay}). 
\end{Remark} 

We will need to know something about the Galois group associated to an $X$-strongly normal extension. This is contained in Theorem 2.3 of \cite{Leon-Sanchez-Pillay}. But we give a summary.   So we assume $L$ is an $X$-strongly normal extension of $K$ and we use notation from Remark 3.2.   In particular we are fixing $b$ such that $L= K\langle b\rangle$. Let $Q$ be the set of realizations of the (isolated) type $q = tp(b/K)$.  Then
$Q$ is a $K$-definable set, which moreover isolates a complete type over $K\langle X\rangle$.  
Let $Aut(Q/K, X)$ be the group of permutations of $Q$ induced by automorphisms of $\mathcal U$ which fix pointwise $K$ and $X$.  Then
\begin{Fact}  $Aut(Q/K,X)$ acts regularly (i.e. strictly transitively) on $Q$.  In other words  $Q$ is a principal homogeneous space for $Aut(Q/K,X)$ (under the natural action). 
\end{Fact}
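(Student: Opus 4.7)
The plan is to verify transitivity and freeness of the action separately; each follows directly from one of the two clauses of Definition 3.1, reformulated through Remark 3.2. For freeness, suppose $\sigma \in Aut(\mathcal{U}/K)$ fixes $X$ pointwise and satisfies $\sigma(b)=b$. Clause (i), as restated in Remark 3.2, says that every realization $b_1$ of $q$ lies in $dcl(K,b,X)$. Since $\sigma$ fixes $K$, $b$, and every element of $X$, it fixes everything in $dcl(K,b,X)$, hence $Q$ pointwise, so the induced permutation of $Q$ is the identity.

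For transitivity, fix $b_1,b_2 \in Q$. Clause (ii), in the form given by Remark 3.2, says that $q$ is weakly orthogonal to $tp(a/K)$ for every finite tuple $a$ from $X$, which by definition of weak orthogonality means $tp(b_1,a/K) = tp(b_2,a/K)$ for every such $a$. Consequently the map from $K \cup X \cup \{b_1\}$ to $K \cup X \cup \{b_2\}$ that is the identity on $K \cup X$ and sends $b_1$ to $b_2$ is a partial elementary map. Since $\mathcal{U}$ is saturated, it extends to an automorphism $\sigma$ of $\mathcal{U}$ over $K$ fixing $X$ pointwise and carrying $b_1$ to $b_2$; the restriction of $\sigma$ to $Q$ is then the desired element of $Aut(Q/K,X)$.

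The only non-trivial step is the passage from the local statement ``$(b_1,a) \equiv_K (b_2,a)$ for each finite $a \subseteq X$'' to a single automorphism fixing all of $X$ pointwise, which is a routine saturation/compactness argument. Apart from that, both halves are essentially formal translations of the defining conditions of an $X$-strongly normal extension, so I expect no real obstacle; the conceptual content is that clause (i) encodes freeness (every other realization of $q$ is already determined by $b$ and $X$) while clause (ii) encodes transitivity (no type-theoretic information about $b$ is revealed by $X$).
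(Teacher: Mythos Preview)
Your argument is correct. The paper does not actually prove this statement; it is recorded as a Fact with a reference to Theorem 2.3 of \cite{Leon-Sanchez-Pillay}, so there is no in-paper proof to compare against. Your decomposition into freeness (from clause (i)) and transitivity (from clause (ii)) is exactly the natural one, and both halves go through as you describe. Two small comments: for freeness you check only that the stabilizer of the distinguished generator $b$ is trivial, but this suffices once transitivity is established (stabilizers are conjugate); alternatively, since every $b'\in Q$ has the same type as $b$ over $K$ and $X$ is $K$-definable, the inclusion $Q\subseteq dcl(K,b',X)$ holds for every $b'\in Q$ by the same reasoning. For transitivity, the passage from ``same type over $K\cup X$'' to an automorphism of $\mathcal{U}$ fixing $K\cup X$ pointwise is, as you say, standard monster-model methodology; one should just note that $X(\mathcal{U})$ is not literally small, so one is implicitly invoking the usual convention that the monster is homogeneous over arbitrary subsets (or equivalently working with a class-sized monster).
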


\begin{Fact} There is a definable group $G$, living in $X_{K}^{eq}$, and defined over $K$, $K$-definable surjective functions $f: Q\times G \to Q$ and  $h:Q\times Q \to G$, and an isomorphism $\mu:Aut(Q/K,X) \to G$ with the following properties:
\begin{enumerate}
\item For $b_{1}, b_{2}\in Q$ and $g\in G$, $h(b_{1},b_{2}) = d$ iff $f(b_{1}, d) = b_{2}$. 
\item  For each $\sigma\in Aut(Q/K,X)$, $\mu(\sigma) = h(b,\sigma(b))$ (equals the unique $d\in G$ such that $f(b,d) = \sigma(b)$).  
\item for $b_{1}, b_{2}, b_{3}\in Q$, $h(b_{1},b_{2})\cdot h(b_{2}, b_{3}) = h(b_{1}, b_{3})$
\item The group operation of $G$ is: $d_{1}\cdot d_{2} = h(b,f(f(b,d_{1}), d_{2}))$
\item The action of $G$ on $Q$ (induced by the isomorphism $\mu$) is   $d\cdot b_{1} = f(f(b,d),h(b,b_{1}))$.
\end{enumerate}

\end{Fact}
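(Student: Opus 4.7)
The plan is to realize $Aut(Q/K,X)$ as a $K$-definable group living in $X_K^{eq}$, exploiting the fact that Definition 3.1(i), together with the isolation noted in Remark 3.2, says exactly that $Q$ is internal to $X$ over $K$: every realization of $q$ lies in $dcl(K,b,X)$. This is the standard setup for extracting a definable binding group from an internal type.

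First, by Remark 3.2 and Remark 2.1, $q=tp(b/K)$ extends uniquely to a complete type $\tilde q$ over $K\langle X\rangle$, so $Aut({\mathcal U}/KX)$ acts transitively on $Q$. For each pair $(b_1,b_2)\in Q\times Q$, I consider the $Aut({\mathcal U}/KX)$-orbit of $(b_1,b_2)$, which coincides with the set of realizations of $tp(b_1b_2/KX)$. By $\omega$-stability of DCF$_{0,m}$ types are definable, so this orbit is $KX$-definable; by elimination of imaginaries it has a canonical parameter $h(b_1,b_2)\in dcl^{eq}(KX)=X_K^{eq}$, and the assignment $(b_1,b_2)\mapsto h(b_1,b_2)$ is $K$-definable because the family of these orbits is uniformly $K$-definable. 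Set $G:=\{h(b_1,b_2):b_1,b_2\in Q\}$, a $K$-definable subset of $X_K^{eq}$.

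Next I define $\mu:Aut(Q/K,X)\to G$ by $\mu(\sigma)=h(b,\sigma(b))$, giving property (2); by Fact 3.3 this is a bijection, and I transport the group structure from $Aut(Q/K,X)$. For property (1): given $d\in G$ and $b_1\in Q$, existence of $b_2\in Q$ with $h(b_1,b_2)=d$ follows by transporting a representative pair $(b_1',b_2')$ with $h(b_1',b_2')=d$ via transitivity of $Aut({\mathcal U}/KX)$ on $Q$ (and using that $d\in X_K^{eq}$ is fixed by $Aut({\mathcal U}/KX)$); uniqueness follows since if $h(b_1,b_2)=h(b_1,b_2')$, then some $\tau\in Aut({\mathcal U}/KX)$ fixes $b_1$ and sends $b_2$ to $b_2'$, so by regularity of the action on $Q$ (Fact 3.3) it must be the identity on $Q$. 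This gives the required $K$-definable surjection $f(b_1,d):=b_2$ satisfying (1). For (3), any $\sigma\in Aut({\mathcal U}/KX)$ fixes $X_K^{eq}$ pointwise, so $K$-definability of $h$ yields $h(\sigma(b_1),\sigma(b_2))=\sigma(h(b_1,b_2))=h(b_1,b_2)$; lifting each of $h(b_1,b_2)$ and $h(b_2,b_3)$ to an element of $Aut(Q/K,X)$ and composing yields the cocycle identity. Properties (4) and (5) are then routine diagram chases through (1)--(3).

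The main obstacle I expect is the $K$-definability and uniformity of $h$: showing that the canonical parameters of the types $tp(b_1b_2/KX)$ vary in a $K$-definable way as $(b_1,b_2)$ ranges over $Q\times Q$, rather than merely pointwise. This is precisely where stability enters essentially (types over $KX$ are definable, their codes form a $K$-definable family), combined with the uniqueness of the extension $\tilde q$ which forces all these orbit-codes to sit inside a single $K$-definable parametrized family indexed by $Q\times Q$. Once this uniformity is established, the regular action from Fact 3.3 automatically upgrades to a $K$-definable regular action of $G$ on $Q$, and everything else is formal verification.
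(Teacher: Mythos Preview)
The paper does not actually prove this statement: it is labeled a Fact and attributed to Theorem~2.3 of \cite{Leon-Sanchez-Pillay}, with the surrounding text merely summarizing the conclusion. So there is no proof in the paper to compare your proposal against.

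That said, your outline is the standard binding-group (liaison group) construction for a type internal to a definable set in a stable theory, which is indeed the content of the cited result, and your verification of (1)--(5) is correct. One suggestion on the point you flag as the main obstacle: rather than invoking canonical parameters of types over $KX$ and then arguing uniformity afterward, it is cleaner to use compactness directly. From $Q\subseteq dcl(K,b_1,X)$ for every $b_1\in Q$, compactness gives a single $K$-definable partial function $f_0(x,\bar z)$ with $\bar z$ ranging over some fixed $X^n$ such that every $b_2\in Q$ has the form $f_0(b_1,\bar c)$ for some $\bar c\in X^n$. Define the $K$-definable equivalence relation $E$ on $X^n$ by $\bar c\,E\,\bar c'$ iff $f_0(b',\bar c)=f_0(b',\bar c')$ for all $b'\in Q$, and set $h(b_1,b_2)=[\bar c]_E$ for any $\bar c$ with $f_0(b_1,\bar c)=b_2$. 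This makes the $K$-definability of $h$ and $f$ immediate, places $G$ visibly in $X_K^{eq}$, and the rest of your argument goes through as written.
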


\begin{Remark}  (i)  On the face of it, $G$ and its group structure are defined over $K\langle b\rangle$, but as $dcl(K,b)\cap X$ is contained in $K$, it is defined over $K$.  On the other hand the action of $G$ on $Q$ DOES need the parameter $b$.  The $G$ we have described is the analogue of the ``everybody's Galois group" from the usual strongly normal  theory (where it is an algebraic group in the absolute constants). In any case, with a different choice of $h$ and $f$ (but the same $b$)  would give the same $G$ up to $K$-definable isomorphism.
\newline
(ii)  In the above we have worked in an ambient saturated model $\mathcal U$. But we could have equally well worked with the differential closure of $K$  (in which $L$ lives) in place of $\mathcal U$. 
\end{Remark}

\begin{Definition} 

\begin{enumerate} 

\item Let $K$ be a differential field, and $X$ a $K$-definable set. We call $L$ an $X$-strongly normal extension of linear type, if (i) $L$ is an $X$-strongly normal extension of $K$, and (ii) The Galois group $G$ as in Fact 3.4  $K$-definably embeds in $GL_{n}(\mathcal U)$. 

\item  Let $K$ be a differential field and $C$ a $K$-definable field of constants. We call $L$ a generalized $PV$ extension of $K$ with respect to $C$ if (i) $L$ is a $C$-strongly normal extension of $K$, and (ii) the Galois group $G$ from 3.4 $K$-definably embeds in $GL_{n}(C)$  (some $n$). 

\end{enumerate}

\end{Definition}

\begin{Remark}
\begin{enumerate}
\item  Note that taking $X=C$, (ii) is stronger than (i), since we
  impose that the Galois group $K$-definably embeds into $GL_n(C)$, and
  not only $GL_n({\mathcal U})$.  Could we replace Definition 3.6.2  by simply ``$L$ is a
  $C$-strongly normal extension of linear type"?  The issue includes the following: Suppose the Galois group $G$ definably embeds in $GL_{n}({\mathcal U})$, and is also in $dcl(C,K)$. Does $G$ $K$-definably embed in $GL_{n}(C)$? 
\item According to the classical theory (\cite{DAAG}) a Picard-Vessiot extension $L$ of $K$ is a differential field extension of $K$  generated by a solution $B$ of a system  $\delta_{1}Z = A_{1}Z$,\dots,$\delta_{m}Z = A_{m}Z$, where $Z$ ranges over $GL_{n}$, each $A_{i}$ is an $n$ by $n$ matrix over $K$, the $A_{i}$ satisfy the Frobenius (integrability) conditions, AND $C_{L} = C_{K}$ where $C_{K}$ denotes ${\mathcal C}\cap K$ etc.  So we see easily that this is an example of a generalized $PV$-extension.
\item  The so-called parameterized Picard-Vessiot theory in \cite{CS}
  gives another example of a generalized PV extension of $K$ (with
  respect to the field of $\delta_1$-constants).  Here we consider a single linear differential equation $\delta_{1}Z = AZ$ where again $Z$ ranges over $GL_{n}$ and $A$ is an $n$ by $n$ matrix over $K$. A $PPV$ extension $L$ of $K$ for such an equation is a $\Delta$-extension $L$ of $K$ generated by a solution $B$ of the equation such that $K$ and $L$ have the same $\delta_{1}$-constants.   This is put in a somewhat more general context in \cite{HS}.
\end{enumerate}
\end{Remark}

\begin{Proposition} Suppose that $K$ is a differential field, $X$ a $K$-definable set, and $L = K\langle b\rangle$ an $X$-strongly normal extension of $K$ of linear type, with Galois group $G < GL_{n}(\mathcal U)$.  Let $\mu$ be the isomorphism between $Aut(Q/K,X)$ and $G$ as in Fact 3.4. Then there is $\alpha\in GL_{n}(L)$ such that $\mu(\sigma) = \alpha^{-1}\sigma(\alpha)$ for all $\sigma\in Aut(Q/K,X)$. 
\end{Proposition}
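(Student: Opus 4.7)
The plan is to realize $\alpha$ as the value $\psi(b)$ of a $K$-definable right-equivariant morphism $\psi\colon Q \to GL_n(\mathcal U)$, obtained via Hilbert 90 for $GL_n$.

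First, I would recognize $Q$ as a right $G$-torsor via the $K$-definable function $f$. Fact 3.4(1) says $f(b_1,\cdot)\colon G \to Q$ is a bijection for each $b_1 \in Q$, and Fact 3.4(4) gives $f(f(b,d_1),d_2)=f(b,d_1 d_2)$. Writing any $b_1 \in Q$ as $f(b,c)$ and substituting, this upgrades to $f(f(b_1,d_1),d_2)=f(b_1, d_1 d_2)$ and $f(b_1,e)=b_1$, so $f$ defines a free, transitive, $K$-definable right $G$-action on $Q$. Crucially, Fact 3.4(2) gives $\sigma(b)=f(b,\mu(\sigma))$, so the Galois action on $b$ is exactly the right $f$-action by $\mu(\sigma)$.

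Next, I would produce a $K$-definable map $\psi\colon Q \to GL_n(\mathcal U)$ satisfying the right-equivariance
$$\psi(f(b_1,g)) \;=\; \psi(b_1)\cdot i(g) \qquad (b_1 \in Q,\ g \in G).$$
Existence of such a $\psi$ amounts to triviality over $K$ of the push-forward $GL_n$-torsor $(Q \times GL_n(\mathcal U))/G$, which is Hilbert 90 for $GL_n$ in the constrained-cohomology sense of Kolchin \cite{DAG}. Concretely, $\psi$ realizes $Q$ as a $K$-definable right $G$-coset inside $GL_n(\mathcal U)$.

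Setting $\alpha := \psi(b) \in GL_n(L)$, the $K$-definability of $\psi$ makes $\sigma$ and $\psi$ commute, so
$$\sigma(\alpha) \;=\; \psi(\sigma(b)) \;=\; \psi(f(b,\mu(\sigma))) \;=\; \psi(b)\cdot i(\mu(\sigma)) \;=\; \alpha\cdot \mu(\sigma),$$
identifying $\mu(\sigma)$ with $i(\mu(\sigma))$ (which is fixed by $\sigma$ since $G\subseteq X^{eq}_K$). Hence $\mu(\sigma)=\alpha^{-1}\sigma(\alpha)$, as required. The main obstacle is the Hilbert 90 input in the second step; I expect the author either to invoke the constrained $H^1$-triviality of $GL_n$ from Kolchin directly, or to prove it here by adapting Kolchin's $G$-primitive / $V$-primitive construction (as signalled at the end of Section~2 of the paper).
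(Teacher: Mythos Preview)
Your proposal is correct and follows the same conceptual arc as the paper: realize $\alpha$ as the value at $b$ of a $K$-definable right-equivariant map $Q\to GL_n$, whose existence is a Hilbert-90 statement. You correctly anticipated the two possibilities for how the author might supply that input; the paper takes the second route, adapting Kolchin's $V$-primitive machinery rather than citing constrained $H^1_\Delta(K,GL_n)=\{1\}$ as a black box.

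Concretely, the paper works with $h$ rather than with a push-forward torsor. It first enlarges the tuple $b$ so that $h:Q\times Q\to GL_n$ becomes a $K$-\emph{rational} (not merely $K$-differential-rational) function and so that independent pairs in $DCF_{0,m}$ and in $ACF$ match up. It then passes to the algebraic variety $W$ over $K$ with generic point $b$, extends $h$ to a rational cocycle on a Zariski-open $U\times U\subseteq W\times W$ using Kolchin's $K$-cohomology results (Chapter~V of \cite{DAAG}), picks $u\in U(K^{alg})$, and trivializes the ordinary Galois cocycle $h_u(\sigma)=h(u,\sigma(u))$ via the classical $H^1(K,GL_n)=\{1\}$. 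This produces a $K$-rational $g:W\to GL_n$ with $h(b_1,b_2)=g(b_1)^{-1}g(b_2)$, and one sets $\alpha=g(b)$. Your push-forward-torsor packaging is slicker, but the paper's argument is more self-contained and makes explicit that only field-theoretic Hilbert~90 is needed, with no assumption that $K$ be algebraically closed---which is exactly the point the paper flags at the start of its proof when contrasting with the easier case of \cite{Pi1}.
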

\begin{proof} This is an adaptation of the proof of Corollary 1, Chapter
  VI,  from \cite{DAAG}.  When $X(K) = X(K^{\rm diff})$ (which was part of the definition of $X$-strongly normal extension in the original paper \cite{Pi1}), and $K$ is algebraically closed, it is easier, and is Proposition 3.4 of \cite{Pi1} (in the one derivation case which extends easily to several derivations).

We use the objects and notation in Fact 3.4. As $G$ is a subgroup of $GL_{n}$, we can consider $h$ as a $K$-definable function from $Q\times Q \to GL_{n}$.  
We already have $q = tp(b/K)$.  Let now $q_{0}$ be $tp(b/K)$ in $ACF$. Let  $n$ be the number of types $tp(b,c/K)$ in $DCF_{0}$  where $c$ realizes $q$ and $c$ is independent from $b$ (in the sense of $DCF_{0}$). Likewise let $n_{0}$ be the number of types $tp(b,c/K)$ in $ACF_{0}$ where $c$ realizes $q_{0}$ and is independent from $b$ over $K$ in $ACF_{0}$. 

\vspace{2mm}
\noindent
{\em Claim.}  After replacing $b$ by some larger finite tuple in $L$ we may assume that (i) $h(-,-)$ is a $K$-rational function (rather than $K$-differential rational), and (ii)  $n = n_{0}$. 
\newline
{\em Proof.}  We know that  if $L_{1}, L_{2}$ are differential fields containing $K$ then the differential field generated by $L_{1}$ and $L_{2}$ coincides with the field generated by $L_{1}$ and $L_{2}$.   So we can apply compactness to the implication: if $b,c$ realize $q$ then $h(b,c)$ is contained in the field generated by $K\langle b\rangle$ and $K\langle c\rangle$ to obtain the required conclusion (i).  Clearly we can further extend $b$ so as to satisfy (ii). 

\vspace{2mm}
\noindent
Let $W$ be the (affine) algebraic  variety over $K$ whose generic point is $b$, i.e. whose generic type is $q_{0}$. Then by Step I we have the $K$-rational function $h(-,-)$ such that $h(b_{1}, b_{2})$ is defined whenever $b_{1}, b_{2}$ are independent realizations of $q_{0}$. Moreover if $b_{1}$, $b_{2}$, $b_{3}$ are  independent realisations of $q_{0}$ then $h(b_{1},b_{2})\cdot h(b_{2},b_{3}) = h(b_{1}, b_{3})$ (using 3.4.3).  We now refer to Chapter V, Section 17 of \cite{DAAG} which is just about algebraic varieties, and by Proposition 24 there, there is a Zariski-dense, Zariski open over $K$ subset $U$ of $W$ such that $h$ extends to a $K$-rational function to $GL_{n}(\mathcal U)$, which we still call $h$, which is defined on $U\times U$, satisfies the cocycle condition, and  $h(u,u) = 1$ for all $u\in U$ and $h(v,u) = h(u,v)^{-1}$.

Let us now pick $u\in U(K^{alg})$, and define $h_{u}:Gal(K^{alg}/K) \to
GL_{n}(K^{alg})$ by  $h_{u}(\sigma) = h(u,\sigma(u))$ for $\sigma\in Gal(K^{alg}/K)$.  Then by Theorem 12 of Chapter V of \cite{DAAG}, $h_{u}\in H^{1}(K,GL_{n})$, namely is continuous and satisfies the condition $h_{u}(\sigma\tau) = h_{u}(\sigma)\tau(h_{u}\tau)$.  As $H^{1}(K,GL_{n})$ is trivial, the cocycle $h_{u}$ is ``trivial" namely there is $x\in GL_{n}(K^{alg})$ such that $h_{u}(\sigma) = x^{-1}\sigma(x)$ for all $\sigma\in Gal(K^{alg}/K)$.  As in the proof of part (b) of Theorem 12, Chapter V, \cite{DAAG} there is a $K$-rational function $g:W\to GL_{n}$ such that  $h(u,v) = g(u)^{-1}g(v)$ for all $(u,v)\in dom(h)$.  Returning to the differential algebraic setting, it follows that 
$h(b,\sigma(b)) = g(b)^{-1}\cdot g(\sigma(b))^{-1}$ for all $\sigma\in Aut(Q/K,X)$.  Let $g(b) = \alpha\in GL_{n}(L)$. Then $g(\sigma(b)) = \sigma(g(b)) = \sigma(\alpha)$ for all 
$\sigma\in Aut(Q/K,X)$, as required. 
\end{proof} 

\begin{Corollary} Under the same assumptions as the proposition above, there is $\alpha\in GL_{n}(L)$ such that $K\langle\alpha\rangle = L$ and the coset $\alpha\cdot G$ is defined over $K$.  (Hence $L$ is a generated by a solution of the ``PDE" $\nu(z) = a$  where $\nu$ is the $K$-definable map from $GL_{n}(\mathcal U)$ to $W = GL_{n}({\cal U})/G$, and $a\in W(K)$. )
\end{Corollary}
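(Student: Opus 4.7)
The plan is to invoke Proposition~3.8 to fix $\alpha\in GL_n(L)$ with $\mu(\sigma)=\alpha^{-1}\sigma(\alpha)$ for every $\sigma\in Aut(Q/K,X)$, and then verify two things: (a) the coset $\alpha G$ lies in $W(K)$ for $W:=GL_n/G$ the $K$-definable coset space, and (b) $K\langle\alpha\rangle=L$. Both will follow from a single device. Given any $\sigma\in Aut(\mathcal{U}/K)$, the image $\sigma(b)$ still lies in $Q$ (since $Q$ is $K$-definable), and by regularity of the action of $Aut(Q/K,X)$ on $Q$ (Fact~3.3) I can find $\tilde\tau\in Aut(\mathcal{U}/K\cup X)$ with $\tilde\tau(b)=\sigma(b)$. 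Since $\tilde\tau$ and $\sigma$ both fix $K$ pointwise and send $b$ to the same element, they must agree on the whole differential field $L=K\langle b\rangle$.

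For (a), this device immediately gives $\sigma(\alpha)=\tilde\tau(\alpha)=\alpha\cdot\mu(\tau)\in\alpha G$, where $\tau=\tilde\tau|_Q\in Aut(Q/K,X)$. So the coset $\alpha G$ is fixed setwise by every $K$-automorphism of $\mathcal{U}$, and elimination of imaginaries for $DCF_{0,m}$ together with saturation then yield $\alpha G\in W(K)$.

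For (b), since $\alpha\in L$ one inclusion is free; for the reverse I apply the device again to an arbitrary $\tilde\sigma\in Aut(\mathcal{U}/K\cup\{\alpha\})$, producing $\tilde\tau\in Aut(\mathcal{U}/K\cup X)$ with $\tilde\tau(b)=\tilde\sigma(b)$, and hence $\tilde\tau(\alpha)=\tilde\sigma(\alpha)=\alpha$. This forces $\mu(\tau)=1$, so $\tau=\mathrm{id}$ on $Q$ and $\tilde\sigma(b)=b$. Thus $b$ is fixed by every element of $Aut(\mathcal{U}/K\cup\{\alpha\})$, giving $b\in dcl(K,\alpha)=K\langle\alpha\rangle$ and the required equality. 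Setting $a:=\alpha G\in W(K)$ and letting $\nu:GL_n(\mathcal{U})\to W$ be the canonical $K$-definable quotient, $\alpha$ will then be a solution of $\nu(z)=a$ generating $L$ over $K$. The only point that could plausibly need care is the replacement device itself---namely, that two $\Delta$-field automorphisms of $\mathcal{U}$ fixing $K$ pointwise and agreeing on the tuple $b$ must agree on $K\langle b\rangle$---but this is immediate from the definition of the differential field generated by $K\cup\{b\}$, so no serious obstacle arises.
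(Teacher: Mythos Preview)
Your proof is correct. It differs from the paper's in one organisational point worth noting.

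For part (a), the paper argues that invariance of $\alpha G$ under $Aut(Q/K,X)$ yields $\alpha G\in dcl^{eq}(K,X)$, observes separately that $\alpha G\in dcl^{eq}(L)$ since $\alpha\in L$, and then invokes the weak-orthogonality hypothesis (Definition~3.1(ii), via Remark~2.1) to conclude $dcl^{eq}(K,X)\cap dcl^{eq}(L)=dcl^{eq}(K)$. Your ``replacement device'' bypasses this intersection: by lifting an arbitrary $\sigma\in Aut(\mathcal{U}/K)$ to some $\tilde\tau\in Aut(\mathcal{U}/K\cup X)$ agreeing with $\sigma$ on $L$, you get $Aut(\mathcal{U}/K)$-invariance of $\alpha G$ in one step. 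The hypothesis (ii) is still present, but hidden inside the transitivity assertion of Fact~3.3 that makes the replacement possible.

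For part (b), the paper's sentence ``if $\sigma\in Aut(Q/K,X)$ fixes $\alpha$ then $\sigma$ is the identity, so $L=K\langle\alpha\rangle$'' tacitly appeals to a Galois correspondence for $X$-strongly normal extensions (intermediate differential fields correspond to closed subgroups of $Aut(Q/K,X)$). Your argument unpacks this: you show directly that every $\tilde\sigma\in Aut(\mathcal{U}/K\cup\{\alpha\})$ fixes $b$, again via the same replacement device, so that $b\in dcl(K,\alpha)$ follows without invoking any external correspondence. In that sense your write-up is more self-contained, while the paper's is terser; the underlying mathematics is the same.
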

\begin{proof} Let $\alpha\in GL_{n}(L)$ be as in the conclusion of Proposition 3.8 above.  Then for each $\sigma\in Aut(Q/K,X)$, $\sigma(\alpha)\in \alpha\cdot G$, whereby $\alpha\cdot G$ is fixed (setwise) by $Aut(Q/K,X)$. This implies that $\alpha\cdot G$ is defined over $K,X$. But it is also clearly defined over $L$. Hence, by our assumptions, it is defined over $K$. 
On the other hand, if $\sigma\in Aut(Q/K,X)$ fixes $\alpha$ then $\mu(\sigma) = 1\in G$, so $\sigma$ is the identity, so fixes $L$. So $L= K\langle\alpha\rangle$. 
\end{proof} 

\begin{Remark} The obvious analogies of Proposition 3.8 and Corollary 3.9 hold if the Galois group $G$ is assumed to $K$-definably embed in an algebraic group $H$ for which $H^{1}(K,H) = \{1\}$. 
\end{Remark} 

We now want to get somewhat more explicit information when $L$ is a generalized $PV$ extension of $K$. The following easy lemma which is left to the reader  tells how to eliminate the interpretable set $GL_{n}({\mathcal U})/GL_{n}(C)$ when $C$ is a definable subfield of $\mathcal U$.   
\begin{Lemma} Let $C$ be a ``field of constants" defined by the set of common zeros of definable derivations $D_{1},\ldots,D_{r}$. Fix $n$.  Let $b_{1}, b_{2}\in GL_{n}(\mathcal U)$. Then $b_{1}GL_{n}(C) = b_{2}GL_{n}(C)$ iff  $D_{i}(b_{1})\cdot b_{1}^{-1} = D_{i}b_{2}\cdot b_{2}^{-1}$ for $i=1,\ldots,r$. 
\end{Lemma}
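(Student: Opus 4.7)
The plan is to reduce everything to a direct Leibniz-rule calculation with the logarithmic derivative $D_i(b)\cdot b^{-1}$. Write $c := b_2^{-1} b_1$, so that $b_1 GL_n(C) = b_2 GL_n(C)$ is equivalent to $c \in GL_n(C)$, which in turn (by the definition of $C$ as the common zero set of $D_1,\dots,D_r$, applied entry-wise) is equivalent to $D_i(c) = 0$ for $i = 1,\dots,r$. So the lemma will follow once we relate $D_i(c)$ to the two logarithmic derivatives.

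For the forward direction, I would assume $c \in GL_n(C)$, write $b_1 = b_2 c$, and apply the Leibniz rule componentwise to get $D_i(b_1) = D_i(b_2) c + b_2 D_i(c) = D_i(b_2) c$, since $D_i(c) = 0$. Multiplying on the right by $b_1^{-1} = c^{-1} b_2^{-1}$ yields $D_i(b_1) b_1^{-1} = D_i(b_2) b_2^{-1}$ immediately.

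For the reverse direction, I would again write $b_1 = b_2 c$ and apply Leibniz: $D_i(b_1) = D_i(b_2) c + b_2 D_i(c)$, so
\[
b_2 D_i(c) \;=\; D_i(b_1) - D_i(b_2) c.
\]
The hypothesis $D_i(b_1) b_1^{-1} = D_i(b_2) b_2^{-1}$ rewrites as $D_i(b_1) = D_i(b_2) b_2^{-1} b_1 = D_i(b_2) c$, which plugged into the displayed equation gives $b_2 D_i(c) = 0$; since $b_2 \in GL_n(\mathcal U)$ is invertible, $D_i(c) = 0$ for each $i$, hence $c \in GL_n(C)$.

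The only mildly subtle point is justifying that the matrix derivation $D_i$ satisfies the Leibniz rule on products of matrices (which is of course immediate from applying $D_i$ to each entry of a matrix product via the usual sum-of-products formula) and the observation that ``$c$ has all entries in $C$'' is literally the statement $D_i(c) = 0$ for all $i$. There is no real obstacle here; the computation is essentially the classical fact that the logarithmic derivative descends to a well-defined map on the coset space $GL_n(\mathcal U)/GL_n(C)$, and the authors are right that this can safely be left to the reader.
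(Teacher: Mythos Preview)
Your proof is correct and is precisely the direct Leibniz-rule computation the paper has in mind; note that the paper explicitly leaves this lemma to the reader, so there is no alternative argument to compare against. The reduction to $c = b_{2}^{-1}b_{1}$ and the two-direction verification via $D_{i}(b_{2}c) = D_{i}(b_{2})c + b_{2}D_{i}(c)$ is exactly the expected route.
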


\begin{Corollary}  Let $K$ be a differential field subfield of $\mathcal U$, and let $C$ be a $K$-definable field of constants, defined by the common zero set of $D_{1},\ldots,D_{r}$ where the $D_{i}\in K\Delta$.  Let $L$ be a differential field extension of $K$. Then the following are equivalent:
\newline
(i) $L$ is a generalized Picard-Vessiot extension of $K$ with respect to $C$,
\newline
(ii) for some $n$ there are $n$ by $n$ matrices $A_{1}$,\dots,$A_r$ over $K$, and a solution $\alpha\in GL_{n}(\mathcal U)$ of the system $D_{1}Z = A_{1}Z$,\dots, $D_{r}Z = A_{r}Z$, such that $L = K\langle \alpha \rangle$,  and $K\langle \alpha\rangle  \cap K\langle C\rangle = K$. 
\end{Corollary}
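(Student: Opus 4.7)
For (i)$\Rightarrow$(ii), I would apply Proposition 3.8 and Corollary 3.9 to the $C$-strongly normal extension $L/K$, with Galois group $G\leq GL_n(C)$, to obtain $\alpha\in GL_n(L)$ with $L=K\langle\alpha\rangle$ and $\mu(\sigma)=\alpha^{-1}\sigma(\alpha)$ for $\sigma\in Aut(Q/K,C)$. Set $A_i:=D_i\alpha\cdot\alpha^{-1}$, a matrix with entries in $L$. For any $\sigma\in Aut(\mathcal{U}/K\cup C)$ one has $\sigma(\alpha)\in\alpha\cdot GL_n(C)$, so Lemma 3.11 gives $\sigma(A_i)=A_i$. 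By elimination of imaginaries in DCF$_{0,m}$ this forces $A_i\in dcl(K\cup C)=K\langle C\rangle$; combined with $A_i\in L$ and the weak orthogonality $L\cap K\langle C\rangle=K$ from Definition 3.1(ii), this gives $A_i\in K$. The remaining condition $K\langle\alpha\rangle\cap K\langle C\rangle=K$ is immediate from Definition 3.1(ii).

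For (ii)$\Rightarrow$(i), given $\alpha$ and $A_i\in K$ with $D_i\alpha=A_i\alpha$, $L=K\langle\alpha\rangle$, and $L\cap K\langle C\rangle=K$, a direct computation using $\sigma(A_i)=A_i$ shows that for any $\sigma\in Aut(\mathcal{U}/K)$ the element $g_\sigma:=\alpha^{-1}\sigma(\alpha)$ satisfies $D_i g_\sigma=0$ for all $i$, so lies in $GL_n(C)$; hence $\sigma(\alpha)\in L\langle C\rangle$ and $L$ is a $C$-strongly normal extension of $K$. The map $\sigma\mapsto g_\sigma$ is an injective group homomorphism $Aut(Q/K,C)\to GL_n(C)$, corresponding via $\mu$ to an abstract embedding $G\hookrightarrow GL_n(C)$. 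To witness $K$-definability, I would write $\alpha=r(b)$ for a $K$-rational function $r$ and, using the action map $f$ of Fact 3.4, express $g_\sigma=F(b,\mu(\sigma))$ where $F(b,d):=r(b)^{-1}r(f(b,d))$ is $K$-definable. Since $F(b,d)\in C$ and any two realizations of $q=tp(b/K)$ are conjugate under $Aut(\mathcal{U}/K\cup C)$, which fixes $C$ pointwise, $F$ is independent of its first coordinate and descends to the desired $K$-definable embedding $G\hookrightarrow GL_n(C)$.

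The delicate step I anticipate is the $K$-definability of the embedding $G\hookrightarrow GL_n(C)$ in the second direction. The abstract embedding and the fact that its image lies in $GL_n(C)$ follow immediately from the equation $D_i\alpha=A_i\alpha$, but descending the a priori $K\langle b\rangle$-definable formula $F(b,-)$ to a $K$-definable map on $G$ relies crucially on the weak orthogonality of $tp(b/K)$ to types of tuples from $C$.
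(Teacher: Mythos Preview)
Your proof is correct in both directions, but takes a more circuitous route than the paper's, particularly in the direction (ii)$\Rightarrow$(i), where you yourself flag the delicate step.

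For (i)$\Rightarrow$(ii), you show $\sigma(A_i)=A_i$ for $\sigma$ fixing $K\cup C$, deduce $A_i\in K\langle C\rangle$, then intersect with $L$ using orthogonality. The paper instead uses Corollary~3.9 more fully: since the coset $\alpha\cdot G$ is already $K$-definable and $G\leq GL_n(C)$ is $K$-definable, the larger coset $\alpha\cdot GL_n(C)=(\alpha G)\cdot GL_n(C)$ is $K$-definable; Lemma~3.11 then places each $A_i$ in $dcl(K)=K$ immediately, bypassing the detour through $K\langle C\rangle$. (Your appeal to elimination of imaginaries is also superfluous here---being fixed by $Aut(\mathcal{U}/K\cup C)$ already gives $A_i\in dcl(K\cup C)$.)

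For (ii)$\Rightarrow$(i), your descent argument for the $K$-definability of the embedding $G\hookrightarrow GL_n(C)$ is valid, but the paper avoids it entirely. As Remark~3.5(i) notes, the group $G$ of Fact~3.4 is determined only up to $K$-definable isomorphism, the isomorphism class being independent of the choice of $h,f$. So the paper simply takes $b=\alpha$ as the generator and the \emph{natural} functions $h(b_1,b_2)=b_1^{-1}b_2$, $f(b_1,c)=b_1c$; with these choices the Galois group of Fact~3.4 is \emph{constructed} as a $K$-definable subgroup of $GL_n(C)$ from the outset, and there is nothing to descend. Your argument is a correct instance of the general ``show independence of $b$ via orthogonality'' technique, but exploiting the freedom in Fact~3.4 makes the step you flag disappear.
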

\begin{proof} (ii) implies (i).  First note that the set of common solutions of the system $D_{1}Z = 
A_{1}Z$, \dots, $D_{r}Z = A_{r}Z$ is precisely the coset $b\cdot GL_{n}(C)$ inside $GL_{n}(U)$.  Letting $q = tp(b/K)$,  every realization $b_{1}$ of $q$ is of the form $bc$ for some $c\in GL_{n}(C)$.  So bearing in mind the second part of (ii), $L = K\langle b\rangle$ is a $C$-strongly normal extension of $K$.  Let $Q$ be the set of realizations of the type $q$ (which we know to be isolated and moreover implies  a complete type over $K, C$).  Taking $h(b_{1}, b_{2})$ to be $b_{1}^{-1}b_{2}$ (from $Q\times Q \to GL_{n}(C)$), and $f(b_{1},c)$ to be $b_{1}c$, we see that the Galois group $G$ from Fact 3.4 is a $K$-definable subgroup of $GL_{n}(C)$. Hence $L$ is a generalized $PV$ extension of $K$ with respect to $C$.

\vspace{2mm}
\noindent
(i) implies (ii).  Assume that $L = K\langle b\rangle$ is a generalized $PV$ extension of $K$ with respect to $C$. So we may assume that the Galois group $G$ is a subgroup of $GL_{n}(C)$. Let $\alpha\in GL_{n}(L)$ be given by Corollary 3.9, namely $L = K\langle\alpha\rangle$ (so $tp(\alpha/K)$ implies $tp(\alpha/K,C)$)  and the coset $\alpha\cdot G$ is defined over $K$.  But then the coset $\alpha\cdot GL_{n}(C)$ is also defined over $K$. Let $A_{i} = D_{i}(\alpha)\alpha^{-1}$ for $i=1,\ldots,r$. By Lemma 3.11, each $A_{i}$ is an $n\times n$ matrix over $K$, so (ii) holds. 
\end{proof} 

The following follows from the proof of Proposition 4.1 of \cite{Pi1} and is essentially a special case of that proposition. 
\begin{Remark}  (i) Let $F_{0}$ be a differentially closed $\Delta$-field, $C$ an $F_{0}$-definable field of constants, and $G$ an $F_{0}$-definable subgroup of $GL_{n}(C)$. Then there are differential fields $F_{0} < K< L$ such that $L\cap F_{0}\langle C\rangle =  F_{0}$, and $L$ is a generalized Picard-Vessiot extension of $K$ with respect to $C$ with Galois group $G$. 
\newline
(ii) $K_{0}$ be a differential field (subfield of $\mathcal U$) which is  contained in the field of $\delta_{1}$-constants $C$ of $\mathcal U$ and is differentially closed as a $\Delta\setminus\{\delta_{1}\}$ field. Let $G$ be a subgroup of $GL_{n}(C)$ defined over $F_{0}$. Then there are again differential subfields $F_{0}< K <L$ of $\mathcal U$, such that
$C\cap L = C_{0}$, and $L$ is a $PPV$ extension of $K$ with Galois group $G$. 
\end{Remark}

The reader should note that in the remark above,  the base  $K$ is generated (over $F_{0}, K_{0}$, respectively) by something which is $\Delta$-transcendental. 

\section{Main results}
Here we prove both the positive and negative results around trying to extend \cite{Pi2} to the context of several commuting derivations. 
\begin{Definition} (i) $K$ is closed under generalized strongly normal extensions of linear type, if for no $K$-definable set $X$ does $K$ have a proper $X$-strongly normal extension of linear type.
\newline
(ii) $K$ is strongly closed under generalized strongly normal extensions of linear type if for every $K$-definable subgroup $G$ of $GL_{n}(\mathcal U)$ and $K$-definable coset $Y$ of $G$ in $GL_{n}(\mathcal U)$, there is $\alpha\in Y\cap GL_{n}(K)$.
\newline
(iii) $K$ is closed under generalized $PV$-extensions if $K$ has no proper generalized $PV$-extension.
\newline
(iv) $K$ is strongly closed under generalized $PV$-extensions if for
every $K$-definable field of constants $C$ defined by derivations
$D_{1},\ldots,D_{r}$, and consistent system $D_{1}Z =
A_{1}Z,\ldots,D_{r}Z = A_{r}Z$ where the $A_{i}$ are over $K$, there is
a solution $\alpha\in GL_{n}(K)$. 
\end{Definition}
Note that we could restate (iv) in the same way as (ii):  every $K$-definable coset $Y$ of $GL_{n}(C)$ in $GL_{n}(\mathcal U)$ has a $K$-point. 

\begin{Remark} (i) In the definition
 above, (ii) implies (i) and (iv) implies (iii).
\newline  
(ii) We might also want to define $K$ to be very strongly closed under generalized $PV$-extensions, if we add to (iii) or equivalently (iv), that for every $K$-definable field $C$ of constants $C(K) = C(K^{\rm diff})$.
\end{Remark}

A first attempt at generalizing  \cite{Pi2}  to several commuting
derivations might be to ask whether $K$ is algebraically closed and
(strongly) closed under generalized $PV$ extensions iff
$H^{1}_{\Delta}(K,G)$ is trivial for all linear  {differential} algebraic groups defined over $K$.  Right implies left is trivial. But the following gives a counterexample to left implies right when $m=2$. In this case we can simply quote results of Suer. We will point out the extension to arbitrary $m> 1$ later. 
\begin{Fact} ($m = 2$.) Let $G$ be the subgroup of $({\mathcal U},+)$
  defined by $\partial_{1}(y) = \partial_{2}^{2}(y)$ (the so-called heat
  variety). 
\begin{enumerate}
\item Then the $U$-rank of $G$ is $\omega$ and the generic type  of
  $G$ is orthogonal to $stp(c/A)$ where $c$ is any tuple from an
  $A$-definable field of constants (i.e. proper $A$-definable subfield
  of $\mathcal U$). 
\item Let $p$ be a type (over some differential field $K$) realised in $(\calu,+)$, and assume
  that if $a,b$ are independent realisations of $p$ over $K$, then $a-b$
  realises the generic type  
  of $G$ over $K$. If $C$ a $K$-definable field of constants, then $p$ is orthogonal to $tp(c/K)$ for any
  $c$ generating a $C$-strongly
 normal extension of $K$ of linear type.  
\end{enumerate} 
\end{Fact}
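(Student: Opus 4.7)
The plan for part (1) is to invoke S\"uer's analysis of additive subgroups in $\mathrm{DCF}_{0,m}$ (\cite{Su2}): the Kolchin polynomial of the heat variety $G$ is $T+1$, giving $U(G)=\omega$, and the orthogonality of its generic type to any $\mathrm{stp}(c/A)$ with $c$ a tuple from an $A$-definable field of constants is also established there (and can be seen directly from the fact that the generic type of $G$ is a regular type admitting no non-trivial definable interaction with a proper definable subfield).

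For part (2), I would combine part (1) with two standard ingredients. First, by Definition 3.1(i) and Remark 3.2 applied with $X=C$, every realisation of $tp(c/K)$ lies in $\mathrm{dcl}(K,c,C)$, so $tp(c/K)$ is internal to the constant field $C$ with parameters in $K\langle c\rangle$ (the linear-type hypothesis does not appear to enter this step). Second, a routine $\omega$-stable stabiliser computation identifies $\mathrm{Stab}(p)=G$: one has $(p-p)\subseteq \mathrm{Stab}(p)$, so by hypothesis $\mathrm{Stab}(p)$ contains generic elements of the connected group $G$, forcing $\mathrm{Stab}(p)\supseteq G$; the reverse inclusion follows from the fact that $\mathrm{Stab}(p)$ is generated by such differences, all of which land in $G$. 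Fixing $a_0\models p$ and setting $K'=\mathrm{acl}(K\langle a_0\rangle)$, the (stationary) nonforking extension $p\mid K'$ becomes the translate by $a_0\in K'$ of the generic type of $G$ over $K'$.

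Finally, I would invoke the classical internality/orthogonality principle of $\omega$-stable theories: if $q$ is internal to a definable set $C$ and $r$ is orthogonal to every stationary type realised in tuples from $C$ over every base, then $r\perp q$. (Sketch: write a realisation of $q$ as $\mathrm{dcl}(B',\bar c)$ with $\bar c$ from $C$, choose $B'$ independent over $K'$ from the given realisation of $r$, and apply orthogonality of $r$ to $\mathrm{stp}(\bar c/B')$ together with transitivity of non-forking.) Applied with $r$ the generic of $G$ over $K'$ and $q=tp(c/K)\mid K'$, part (1) and the internality step above supply the hypotheses, yielding $r\perp q$ over $K'$. Translation by $a_0\in K'$ then gives $p\mid K' \perp q$, and standard nonforking/stationarity bookkeeping over $K^{\mathrm{alg}}$ delivers $p\perp tp(c/K)$ over $K$. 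The main technical hurdle is the stabiliser identification and checking that translation by $a_0$ preserves orthogonality in the intended sense; the internality/orthogonality principle is classical.
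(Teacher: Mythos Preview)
Your proposal is correct and follows essentially the same strategy as the paper. For part~(1) both invoke S\"uer's results. For part~(2) the paper argues, tersely, that over a realisation $a_0$ of $p$ there is a definable bijection between realisations of $p$ and elements of $G$, and over $c$ a definable bijection between realisations of $tp(c/K)$ and a subgroup of $GL_n(K\langle C\rangle)$; orthogonality then follows from part~(1). Your route via $\mathrm{Stab}(p)=G$, translation to the generic of $G$ over $K'$, and the internality/orthogonality principle is the same argument spelled out in standard model-theoretic vocabulary, and is in fact more careful than the paper about passing to $K'=\mathrm{acl}(K\langle a_0\rangle)$ and to nonforking extensions.

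Two minor remarks. Your stabiliser computation as written needs $p$ stationary and $\mathrm{Stab}(p)$ connected to conclude $\mathrm{Stab}(p)=G$ exactly; but you do not actually need this equality, since the hypothesis directly gives that $p\mid K'$ is the $a_0$-translate of the generic of $G$ over $K'$, which is all the rest of the argument uses. Second, your observation that the linear-type hypothesis plays no role is correct: internality of $tp(c/K)$ to $C$ follows from the $C$-strongly normal condition alone, and the paper's explicit invocation of $GL_n$ is more specific than the orthogonality argument requires.
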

\begin{proof} 1. Let $p_{0}$ be the generic type of $G$ (a stationary type over $\emptyset$). By Proposition 4.5 of \cite{Su2}, $p_{0}$ has $U$-rank $\omega$. By Proposition 4.4, Proposition 3.3, and Theorem 5.6 of \cite{Su2},  $p_{0}$ is orthogonal to the generic 
type of any definable subfield $F$ of $\mathcal U$  and is
  orthogonal to any type of rank $<\omega$. Hence it is orthogonal to
any type realised in a field of constants.

2. Over any realisation of $p$, there is a definable bijection
between the set of realisations of $p$ and the elements of
$G$. Similarly, there is a $c$-definable bijection between the set of
realisations of $tp(c/K)$ and some definable subgroup of $GL_n(K\langle
C\rangle)$. 
As $p_0$ is orthogonal to all types realised in constant
subfields of $\calu$, we obtain that $p$ and $q$ are orthogonal.
\end{proof}

\begin{Proposition}  ($m = 2$.)   There is a differential subfield $K$ of $\mathcal U$ which is algebraically closed and very strongly closed under generalized $PV$ extensions, but such that  $H^{1}_{\Delta}(K,G) \neq \{1\}$ where $G$ is the heat variety above. 
\end{Proposition}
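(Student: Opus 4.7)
The plan is to construct $K$ as the union of a chain $K_0 \subseteq K_1 \subseteq \cdots$ of differential subfields of $\mathcal U$, where each $K_{n+1}$ is obtained from $K_n$ by (a) adjoining the algebraic closure of $K_n$, (b) adjoining $C(K_n^{\rm diff})$ for every $K_n$-definable field of constants $C$, and (c) adjoining a solution $\alpha\in GL_m(\mathcal U)$ to every consistent system $E_1 Z = A_1 Z,\ldots,E_r Z = A_r Z$ where $E_1,\ldots,E_r\in K_n\Delta$ define a $K_n$-definable field of constants and the $A_i$ are $m\times m$ matrices over $K_n$.  Standard bookkeeping (enumerate the countably many tasks and interleave) ensures that $K:=\bigcup_n K_n$ is algebraically closed and very strongly closed under generalized $PV$ extensions in the sense of Definition~4.1(iv) combined with Remark~4.2(ii).

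To secure $H^1_\Delta(K,G)\neq\{1\}$ at the limit, use the short exact sequence
\[
0 \longrightarrow G \longrightarrow (\mathcal U,+) \xrightarrow{\;\delta_1-\delta_2^2\;} (\mathcal U,+) \longrightarrow 0
\]
together with the vanishing of $H^1_\Delta$ on $(\mathcal U,+)$ (additive Hilbert~90 for differential fields) to identify $H^1_\Delta(K,G)$ with $K/D_0(K)$, where $D_0:=\delta_1-\delta_2^2$.  Pick $a^*\in\mathcal U$ which is $\Delta$-transcendental over $\mathbb Q$, set $K_0 := \mathbb Q\langle a^*\rangle^{\rm alg}$, fix some $a\in\mathcal U$ with $D_0(a)=a^*$, and set $X:=a+G$.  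A $\Delta$-transcendence argument shows $a^*\notin D_0(K_0)$, i.e.\ $X(K_0)=\emptyset$; thus $X$ witnesses a nontrivial class in $H^1_\Delta(K_0,G)$, and the theorem reduces to the inductive invariant $X(K_n)=\emptyset$ for all $n$.

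I would verify the invariant is preserved through each closure step.  For the algebraic-closure step, if $\beta\in X(K_n^{\rm alg})$, then the Galois average $\bar\beta := \frac{1}{[E:K_n]}\sum_{\sigma\in\mathrm{Gal}(E/K_n)}\sigma(\beta)$ over a Galois $E\supseteq K_n(\beta)$ lies in $X(K_n)$, using that $D_0$ is $\mathbb Q$-linear and Galois-equivariant, $G$ is commutative, and we are in characteristic zero; this contradicts the hypothesis.  For closure under a generalized $PV$ generator $c$ (or an element of $C(K_n^{\rm diff})$), suppose $\beta\in X(K_n\langle c\rangle)$; the map $\sigma\mapsto\sigma(\beta)-\beta$ is a $1$-cocycle from the differential Galois group $G_{\rm PV}:=\mathrm{Gal}_\Delta(K_n\langle c\rangle/K_n)$ to $G(K_n\langle c\rangle)$, and showing it is a coboundary produces $\beta=\gamma+\tau$ with $\gamma\in K_n\cap X$, contradicting the hypothesis.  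Fact~4.3(2) (respectively Fact~4.3(1) for the constants step) tells us that $tp(\beta/K_n)$ is orthogonal to $tp(c/K_n)$ when $\beta$ is generic in $X$ over $K_n$; since $K_n$ is algebraically closed and $\beta\in K_n\langle c\rangle$, weak orthogonality then forces $\beta\in K_n$.

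The main obstacle is the non-generic case, where $\beta\in X(K_n\langle c\rangle)\setminus X(K_n)$ has $U$-rank $<\omega$ over $K_n$, since Fact~4.3(2) only directly controls the generic of $X$.  I would descend on $U$-rank: such a $\beta$ lies in a PHS for a proper $K_n$-definable subgroup $G'<G$ of rank $<\omega$; by Fact~4.3(1) (orthogonality of $p_0$ to every type of rank $<\omega$) combined with the Suer-type structure theory of finite-rank commutative differential algebraic subgroups of $(\mathcal U,+)$ (such $G'$ are, up to $K_n$-definable isogeny, vector groups over $K_n$-definable fields of constants), this reduces to a PHS problem for a vector group over a constant field, trivialized by the very-strong closure already built into $K_n$.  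A cleaner alternative, which I would pursue in parallel, is to verify directly that the inflation--restriction kernel $H^1(G_{\rm PV},G(K_n\langle c\rangle))$ vanishes via a Hilbert~90 argument, using that $G_{\rm PV}$ is a linear algebraic group over a field of constants acting $\mathcal C$-linearly on the $\mathcal C$-vector space $G(K_n\langle c\rangle)$ in characteristic zero.
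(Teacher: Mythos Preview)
Your chain construction and the list of closure operations match the paper's exactly, but you have manufactured a difficulty that does not exist and then proposed shaky fixes for it.

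The key observation you are missing is the paper's Claim: because $a^*$ is chosen \emph{generic in $\mathcal U$} (i.e.\ $U(a^*/\emptyset)=\omega^2$), the coset $X=D_0^{-1}(a^*)$ is a \emph{single complete type} $p$ over $K_0=acl(a^*)$. Indeed, for any $\beta\in X$ we have $a^*=D_0(\beta)\in dcl(\beta)$, so $U(\beta/\emptyset)\geq\omega^2$, forcing $\beta$ generic in $\mathcal U$; hence all elements of $X$ share the same type over $\emptyset$, thus over $a^*$, thus (by stationarity) over $K_0$. There simply is no ``non-generic case''. The correct inductive invariant is then not ``$X(K_n)=\emptyset$'' but the stronger ``$p$ isolates a complete type over $K_n$'', and this is preserved in one stroke by orthogonality (Fact~4.3): $p$ is orthogonal to every type realized in a constant field and to every generalized PV generator, so adjoining such data never splits $p$. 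At the limit $X$ remains a single (non-algebraic) type over $K$, hence $X(K)=\emptyset$.

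Your proposed workarounds for the phantom non-generic case are also problematic on their own terms. The assertion that proper finite-rank definable subgroups $G'<G$ are, up to $K_n$-definable isogeny, vector groups over $K_n$-definable constant fields is not something you can simply invoke; the structure of definable subgroups of $(\mathcal U,+)$ in $DCF_{0,m}$ for $m>1$ is precisely what makes this setting delicate (and is the source of the counterexample). The inflation--restriction route would require $H^1(G_{\rm PV},G(K_n\langle c\rangle))=0$, but the $G_{\rm PV}$-action on $G(K_n\langle c\rangle)$ is not a priori of the standard linear shape to which an additive Hilbert~90 applies, and you give no argument for its vanishing. Once you insert the Claim, all of this machinery is unnecessary.
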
 
\begin{proof} 
 Let $Q$ denote the operator $\delta_{1} - \delta_{2}^{2}$.  Then $Q$ is
 a $\emptyset$-definable surjective homomorphism  from $({\mathcal U},
 +)$ to itself, whose fibres are the cosets of $G$ in $({\mathcal
   U},+)$.  Choose generic $\alpha\in \mathcal U$, namely
 $U(tp(\alpha/\emptyset)) = \omega^{2}$, and let $d = Q(\alpha)$.  Then
 $d$ is also generic in ${\mathcal U}$ (by the $U$-rank inequalities for
 example). Let $X_d = Q^{-1}(d)$ (the solution set of $Q(y)=d$), and $p = tp(\alpha/d)$.  The following is basically a repetition of Claim 1 in the proof of Proposition 4.1 in \cite{Pi1}:
\newline
{\em Claim.} $\alpha$ realizes the generic type of $X_{d}$, and is moreover isolated by the formula $y\in X_{d}$.
\newline
{\em Proof.}  If $\beta\in X_{d}$ then $\beta$ is generic in $\mathcal U$ so $tp(\beta) = tp(\alpha)$. But $Q(\beta) = Q(\alpha) = d$, hence $tp(\beta/d) = tp(\alpha/d)$. As $\beta$ could have been chosen to be generic in $X_{d}$, the claim is proved. 

\vspace{2mm}
\noindent
By the Claim and Fact 4.3,
\newline
$(*)$ for any $A\subset {\mathcal U}$, if $C$ is an $A$-definable field
of constants and $c$ is a tuple of elements of $C$, then $p$ is orthogonal to $tp(c/A)$. Furthermore, $p$ is
  orthogonal to any $tp(b/A)$ with $A\langle b\rangle$ $C$-strongly
  normal over $A$. 
\newline
Let
$K_{0}$ be the algebraic closure of the differential subfield of
$\mathcal U$ generated by $d$ (namely $K_{0} = acl(d)$). So $p$, being
stationary, has a unique extension over $K_{0}$.  Fix a differential
closure $K_{0}^{\rm diff}$ of $K_{0}$. For each $K_{0}$-definable field of
constants $C$ and consistent system $D_{1}Z = A_{1}Z,\ldots,D_{r}Z =
A_{r}Z$ over $K_{0}$ (where $C$ is defined as the the zero set of
$D_{1},\ldots,D_{r}$), adjoin to $K_{0}$, both $C(K_{0}^{\rm diff})$ and a
solution of the system in $GL_{n}(K_{0}^{\rm diff})$. Let $K_{1}$ be the
algebraic closure of the resulting differential field. Then
$K_{0}^{\rm diff} = K_{1}^{\rm diff}$, and by $(*)$, $p$ implies a unique type
over $K_{1}$.  Build $K_{2}$, $K_{3}$,\dots\ similarly and let $K$ be
the union. Then by construction $K$ is algebraically closed and  very
strongly closed under generalized $PV$-extensions, but $p$ isolates a
unique complete type over $K$, which precisely means that $X_{d}$, a $PHS$ for the linear differential algebraic group $G$, has no solution in $K$. 

\end{proof} 

For $m>2$ we can extend the results of \cite{Su2} to show that the
subgroup $G$ of the additive group defined by $\partial_{1}(y)
= \partial_{2}^{2}(y)$   is orthogonal to
any definable field of constants. Indeed, the computations made in
Theorem II.2.6 of \cite{DAAG} (taking $E=\{\delta_2^2\}$) give that $G$ has $\Delta$-type $m-1$ and
typical $\Delta$-dimension $2$. Hence the generic of $G$ is orthogonal
to all types of $\Delta$-type $<m-1$, and to all types of $\Delta$-type
$m-1$ and typical $\Delta$-dimension $1$ (Proposition 2.9 and Theorem
5.6 in
\cite{Su2}). In particular it is orthogonal to any type realised in a
``field of constants'' (Proposition 3.3 of \cite{Su2}).   
Proposition 4.4  readily  adapts, giving a counterexample for any $m>1$ to ``natural" analogues of the main results of \cite{Pi2}.

We now give our  positive result, which is close to being tautological. 

\begin{Proposition} Let $K$ be a differential field. Then the following are equivalent. 
\newline
(i) $K$ is algebraically closed and strongly closed under generalized strongly normal extensions of linear type (i.e. for any $K$-definable subgroup $G$ of $GL_{n}(\mathcal U)$ of $U$-rank $< \omega^{m}$, and $K$-definable coset $X$ of $G$  in 
$GL_{n}({\mathcal U})$, $X\cap GL_{n}(K) \neq \emptyset$). 
\newline
(ii) ${H^{1}_\Delta}(K,G) = \{1\}$ for any linear differential algebraic group $G$ defined over $K$.
\end{Proposition}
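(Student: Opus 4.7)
The plan is to treat the two directions separately. The direction (ii)$\Rightarrow$(i) is routine. Algebraic closure of $K$ follows by applying the vanishing of $H^1_\Delta(K,F)$ to any finite algebraic group $F$ (a $U$-rank $0$ linear DAG embedded in some $GL_n$), which rules out nontrivial finite Galois extensions of $K$; and the strong closure condition is immediate, since a $K$-definable coset $Y=\alpha G$ of $G\le GL_n(\mathcal U)$ is a $G$-PHS under right translation and so carries a $K$-point by (ii).

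For (i)$\Rightarrow$(ii), let $G$ be a linear differential algebraic group over $K$, viewed as a subgroup of some $GL_n(\mathcal U)$, and $X$ a $G$-PHS over $K$. The plan is to realize $X$ as a $K$-definable coset of $G$ in $GL_n$ and then apply the strong closure hypothesis. For the realization, consider the associated $GL_n$-PHS $X\times^G GL_n$; its class lies in $H^1_\Delta(K,GL_n)$, which is trivial because $K$ is algebraically closed (a $GL_n$-torsor is an algebraic $K$-variety with a $K$-point by classical Hilbert~90, and the differential structure on the torsor is uniquely determined by $GL_n$-equivariance). A trivialization of $X\times^G GL_n$ yields a $K$-definable $G$-equivariant embedding of $X$ as a coset $Y\subseteq GL_n$, with $X\cong Y$ as $G$-PHS over $K$, so any $K$-point of $Y$ produces one of $X$.

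If $U(G)<\omega^m$, then (i) applies directly and yields a $K$-point of $Y$. When $U(G)\ge\omega^m$, one reduces to this case via the Zariski closure $\bar G$ of $G$, which is a positive-dimensional algebraic subgroup of $GL_n$: by classical Hilbert~90 for $\bar G$ over the algebraically closed field $K$, the Zariski closure $\bar Y$ of $Y$ (itself a $K$-definable coset of $\bar G$) has a $K$-point $\beta$, and translating by $\beta^{-1}$ reduces the problem to a $K$-definable coset of $G$ inside $\bar G$, i.e.\ to a $K$-point of $\bar G/G$. Iterating through a chain of algebraic subgroups of $\bar G$ containing $G$, one eventually reduces to a coset of a subgroup of $U$-rank strictly below $\omega^m$, where (i) applies.

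I expect the main technical obstacle to lie in this last reduction when $G$ is not normal in $\bar G$, in which case $\bar G/G$ is only a homogeneous space rather than a group; one must arrange for the intermediate subquotients to remain linear differential algebraic and for the residual $U$-rank to fall below $\omega^m$ after finitely many steps.
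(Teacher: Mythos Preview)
Your direction (ii)$\Rightarrow$(i) is fine and matches the paper.

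For (i)$\Rightarrow$(ii) there are two problems.

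First, your justification that $H^{1}_{\Delta}(K,GL_{n})=\{1\}$ is not adequate. A differential algebraic PHS for $GL_{n}$ is not a priori an algebraic variety, so classical Hilbert~90 does not apply directly, and the phrase ``the differential structure on the torsor is uniquely determined by $GL_{n}$-equivariance'' is not a proof. The statement is true, but it is (a special case of) Kolchin's theorem comparing $H^{1}_{\Delta}$ with algebraic $H^{1}$ for algebraic groups over algebraically closed fields; you should invoke it as such. The paper avoids this issue in the low-rank case by quoting the embedding theorem from \cite{Pillay-foundations}, which produces an \emph{algebraic} PHS $(G_{1},X_{1})$ containing $(G,X)$, so that ordinary Galois cohomology over the algebraically closed $K$ trivializes $X_{1}$ and exhibits $X$ as a coset of $G$ in $G_{1}$.

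Second, and more seriously, your treatment of the case $U(G)\geq\omega^{m}$ is not a proof. Passing to the Zariski closure $\bar G$ and translating gives you a $K$-definable coset of $G$ inside $\bar G$, but hypothesis~(i) still does not apply because $U(G)$ has not dropped. Your proposed ``iteration through a chain of algebraic subgroups of $\bar G$ containing $G$'' does not make sense: there is no mechanism here that lowers $U(G)$, the intermediate quotients need not be groups when $G$ is not normal, and there is no termination argument. You have correctly identified this as the obstacle, but you have not overcome it.

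The paper proceeds quite differently here. It uses the inductive principle for short exact sequences $1\to N\to G\to H\to 1$: having already handled all $G$ with $U(G)<\omega^{m}$, one may quotient out any normal subgroup of co-$U$-rank $<\omega^{m}$ and thereby assume $G$ is \emph{$m$-connected} (no proper definable $H$ with $U(G/H)<\omega^{m}$). For $m$-connected linear $G$ the arguments of Cases~2(a) and~2(b) of \cite{Pi2} reduce to $G$ algebraic, and then Kolchin's theorem finishes. The short-exact-sequence d\'evissage is the missing structural ingredient in your outline; without it the high-rank case does not go through.
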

\begin{proof} (i) is a special case of (ii).  (If $K$ is not algebraically closed then a Galois extension is generated by an algebraic $PHS$ $X$ for a finite (so linear) group $G$ over $K$, where $X$ has no $K$-point. And of course any $K$-definable coset of a $K$-definable subgroup $G$ of $GL_{n}({\mathcal U})$ is a special case of a $K$-definable $PHS$ for the linear differential algebraic group $G$.)
\newline
(i) implies (ii):  First as $K$ is algebraically closed, (ii) holds for
finite $G$. So by the inductive principle  (if
  $1\to N\to G\to H\to 1$ is a short exact sequence of differential
  algebraic groups over $K$, and $H^1_\Delta(K,N)=H^1_\Delta(K,H)=\{1\}$,
  then $H^1_{\Delta}(K,G)=\{1\}$. See Lemma 2.1 of \cite{Pi2}  for a proof when
  $|\Delta|=1$)  
 we may assume that $G$ is connected. 
Now first assume that $U(G) < \omega^{m}$. And suppose $(G,X)$ is a
$K$-definable $PHS$. We claim that $(G,X)$
embeds in an  algebraic $PHS$ $(G_{1},X_{1})$ over $K$.  This is stated in fact 1.5 (iii) of \cite{Pi2} with
reference to \cite{Pillay-foundations}. But we should be more precise. What we prove in \cite{Pillay-foundations} is that the differential algebraic group $G$ embeds in an algebraic group over $K$. But this, together with Weil's proof \cite{Weil}, easily adapts to the (principal) homogeneous space context. 

By Lemma 4.7 of \cite{Pillay-foundations}, and the linearity of $G$ we may assume that $G_{1}$ is linear (namely a $K$-algebraic subgroup of $GL_{n}(\mathcal U)$). As $K$ is algebraically closed $(G_{1},X_{1})$ is isomorphic to $(G_{1},G_{1})$ over $K$, and this isomorphism takes $X$ to a coset of $G$ in $G_{1}$ defined over $K$. By (i) $X$ has a $K$-point. 

So  by the inductive principle, we may assume that $G$ is ``$m$-connected", namely has no proper definable subgroup $H$ with $U(G/H) < \omega^{m}$.  Then the last part of the proof of Theorem 1.1 in \cite{Pi2}, more precisely the proofs of Case 2(a) and 2(b), work to reduce to the situation when $G$ is algebraic and we can use Kolchin's Theorem (Fact 1.6 in \cite{Pi2}). 

\end{proof}

Finally we will mention, as requested by the referee,  a nice related result \cite{MO}, which we became aware of after the current paper was submitted, and which does in a sense give a satisfactory extension of \cite{Pi2} to the case of several commuting derivations, although we did not check the proofs in detail.  The content of the main result of \cite{MO}, using notation of the current paper is that the  following are equivalent, in the context of differential fields $(K,\Delta)$ where $\Delta = \{\partial_{1}, \ldots, \partial_{m}\}$ is a finite set of commuting derivations:
 \newline
\noindent
(i) $K$ is algebraically closed, strongly closed under generalized $PV$-extensions, and $H^{1}_{\Delta}(K,G)$ is trivial for any differential algebraic subgroup $G$ of the additive group, defined over $K$. 
\newline
(ii) $H^{1}_{\Delta}(K,G)$ is trivial, for {\em any} linear differential algebraic group defined over $K$. 

So this says that the main theorem of \cite{Pi2} does extend naturally  to  several commuting derivations, modulo the case of principal homogeneous spaces for differential algebraic subgroups of the additive group.   The proof (of (i) implies (ii)) in \cite{MO} essentially follows the general line of the inductive argument of \cite{Pi2}, reducing to the cases where $G$ is  finite, a connected subgroup of the multiplicative group, a connected subgroup of the additive group, or noncommutative simple. The additional assumption in (i) deals with the third case and allows the arguments to go through (with some additional complications and use of results in the literature). 

\section{The difference case}
If $K$ is a field equipped with an automorphism $\sigma$, then by  a linear difference equation over $(K,\sigma)$ we mean something of the form $\sigma(Z) = A\cdot Z$ where $A\in GL_{n}(K)$ and $Z$ is an unknown nonsingular matrix.  When it comes to a formalism for difference equations, Galois theory, etc. there is now a slight discrepancy between algebraic and model-theoretic approaches.  In the former case, difference rings $(R,\sigma)$, which may have zero-divisors, enter the picture in a fundamental way. The latter, on the other hand, is field-based, where the difference fields considered are difference subfields of a ``universal domain" $({\mathcal U},\sigma)$, a model of a certain first order theory $ACFA$ (analogous to $DCF_{0}$).  In this section we will opt for the model-theoretic approach.  Papers such as \cite{CHS} and \cite{Kamensky} discuss differences and compatibilities between the treatments of the Galois theory of linear difference equations in the two approaches.   But we will not actually need to engage with delicate issues regarding Picard-Vessiot extension of difference fields (or rings).

\begin{Definition} Let $(K,\sigma)$ be a difference field. We will say that $(K,\sigma)$ is strongly $PV$-closed if every linear difference equation $\sigma(Z) = A\cdot Z$ over $K$ has a solution in $GL_{n}(K)$.
\end{Definition}

The theory $ACFA$ is the model companion of the theory of fields
equipped with an automorphism, in the language of unitary rings together
with a symbol for an automorphism. See the seminal paper \cite{CH}
which, among other things,  describes the completions of $ACFA$,  its
relative quantifier elimination, and its ``stability-theoretic"
properties (it is unstable but supersimple).  We fix a saturated model
$({\mathcal U}, \sigma)$ of $ACFA$. $F, K, L \ldots$ denote (small)
difference subfields of $\mathcal U$.  By  a difference polynomial
$P(x_{1},\ldots,x_{n})$ over $K$ we mean a polynomial over $K$ in
indeterminates $x_{1},\ldots,x_{n}, \sigma(x_{1}),\ldots,$ $\sigma(x_{n}),
\sigma^{2}(x_{1}),\ldots,\sigma^{2}(x_{n})$ \dots .  By a difference-algebraic  variety (defined over $K$) we mean a subset of some ${\mathcal U}^{n}$ defined by a (finite) set of difference polynomials over $K$.  If $V$ and $W$ are two such difference-algebraic  varieties over $K$ then a difference-algebraic  morphism over $K$ from $V$ to $W$ is a map whose coordinates are given by difference polynomials over $K$. So we have a category of (affine) difference-algebraic varieties. We may just say ``difference variety". 
\begin{Definition}
\begin{enumerate}  
\item   By a linear difference algebraic group  (or just linear difference group) defined over $K$ we mean a subgroup of some $GL_{n}(\mathcal U)$ whose underlying set is a difference algebraic set over $K$.
\item If $G$ is a linear difference algebraic group over $K$, then a difference algebraic $PHS$ over $K$ for $G$ is a difference algebraic variety $X$ over $K$ together with a difference morphism over $K$, $G\times X \to X$ giving $X$ the structure of a $PHS$ for $G$.
\item   If $X$ is such a difference algebraic $PHS$ for $G$ over $K$ we say that $X$ is trivial if $X(K) \neq\emptyset$. 
\end{enumerate} 

\end{Definition}

\begin{Remark}
\begin{enumerate}
\item We should not confuse difference-algebraic varieties and groups with algebraic $\sigma$-varieties and groups from \cite{KP}, which are objects belonging to algebraic geometry.
\item   We have not formally defined $H^{1}_{\sigma}(K, G)$ for a linear
  difference-algebraic group over $K$, partly because there are various
  other choices of what category to work in, such as the category of
  definable $PHS$'s \dots 
\end{enumerate} 
\end{Remark}

\begin{Fact} Let $K$ be a difference subfield of $\mathcal U$, and $a$ a tuple (in
  $\mathcal U$) such that $\sigma(a)\in K(a)^{alg}$. One defines the
  {\em limit degree of $a$ over $K$} by 
$${\rm ld}(a/K)=\lim_{n\rightarrow \infty}[K(a,\ldots,\si^{n+1}(a)):K(a,\ldots,\si^n(a))]$$
and the {\em inverse limit degree of $a$ over $K$} by 
$${\rm ild}(a/K)=\lim_{n\rightarrow \infty}
[K(a,\ldots,\si^{n+1}(a)):K(\si(a),\ldots,\si^{n+1}(a))].$$
Then these degrees are multiplicative in tower (see \cite{Co} section
5.16), e.g. ${\rm ld}(ab/K)={\rm ld}(a/K){\rm ld}(b/K(a,\si(a),\si^2(a),\ldots))$. Observe that if $b$ is
algebraic over $K$, then ${\rm ld}(b/K)={\rm ild(b/K)}$. Hence, setting $\Delta(a/K):=\frac{{\rm ld}(a/K)} {{\rm
    ild}(a/K)}$, if $b\in
K(a)^{alg}$, then $$\Delta(a,b/K)=\Delta(a/K).$$\end{Fact}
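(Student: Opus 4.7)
My plan is to establish the three ingredients in turn and then combine them. First, tower multiplicativity. The stated identity ${\rm ld}(ab/K)={\rm ld}(a/K)\cdot {\rm ld}(b/K(a,\sigma(a),\sigma^2(a),\ldots))$ is the content of \cite{Co}, §5.16, and I would simply quote it. The corresponding statement ${\rm ild}(ab/K)={\rm ild}(a/K)\cdot{\rm ild}(b/K(a,\sigma(a),\sigma^2(a),\ldots))$ follows by the mirror image of Cohn's argument, with the towers shifted by $\sigma$. I take both multiplicativity statements as given.

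Second, the key lemma: ${\rm ld}(b/K)={\rm ild}(b/K)$ whenever $b$ is algebraic over $K$. Set $L_n=K(b,\sigma(b),\ldots,\sigma^n(b))$ and $\tilde L_{n+1}=K(\sigma(b),\ldots,\sigma^{n+1}(b))$. Because $b$ is algebraic over $K$ and $L_n$ is generated over $K$ by finitely many algebraic elements, each $L_n$ is a \emph{finite} extension of $K$. The crucial observation is that $\tilde L_{n+1}=\sigma(L_n)$: the automorphism $\sigma$ of $\mathcal U$ restricts to an automorphism of $K$ (since $K$ is a difference field), and sends $K(b,\sigma(b),\ldots,\sigma^n(b))$ to $K(\sigma(b),\ldots,\sigma^{n+1}(b))$. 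Consequently $[\tilde L_{n+1}:K]=[L_n:K]$, a finite positive integer. Applying the tower law in two ways to $[L_{n+1}:K]$ yields
$$[L_{n+1}:L_n]\cdot[L_n:K] \;=\; [L_{n+1}:K] \;=\; [L_{n+1}:\tilde L_{n+1}]\cdot [\tilde L_{n+1}:K],$$
and dividing by the finite integer $[L_n:K]=[\tilde L_{n+1}:K]$ gives $[L_{n+1}:L_n]=[L_{n+1}:\tilde L_{n+1}]$ for every $n$. The two defining sequences therefore coincide term by term, so in particular ${\rm ld}(b/K)={\rm ild}(b/K)$. Note that the finiteness of $[L_n:K]$ is essential: in the transcendental case the ratios are $\infty/\infty$ and the identity fails in general.

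Finally, assume $b\in K(a)^{alg}$ and set $F:=K(a,\sigma(a),\sigma^2(a),\ldots)$. Since $K(a)\subseteq F$, $b$ is algebraic over $F$, and the lemma applies to give ${\rm ld}(b/F)={\rm ild}(b/F)$. Tower multiplicativity for both ${\rm ld}$ and ${\rm ild}$ now yields
$$\Delta(a,b/K) \;=\; \frac{{\rm ld}(a,b/K)}{{\rm ild}(a,b/K)} \;=\; \frac{{\rm ld}(a/K)\cdot {\rm ld}(b/F)}{{\rm ild}(a/K)\cdot {\rm ild}(b/F)} \;=\; \frac{{\rm ld}(a/K)}{{\rm ild}(a/K)} \;=\; \Delta(a/K),$$
which is what was to be shown.

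The only real point of work is the second step; the rest is bookkeeping. The substance of that step is recognizing that $\tilde L_{n+1}$ is the image of $L_n$ under $\sigma$, so the two denominators in the definitions of ${\rm ld}$ and ${\rm ild}$ have the same finite degree over $K$ — which then forces the two numerators (relative degrees) to agree. The hypothesis $\sigma(a)\in K(a)^{alg}$ ensures the limits in question exist as positive integers and guarantees the non-increasing behaviour of the relevant degree sequences, so that the identities proved termwise yield the equalities of limits.
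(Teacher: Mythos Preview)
Your argument follows exactly the structure the paper indicates: the paper gives no proof beyond the citation to Cohn for tower multiplicativity and the bare word ``Observe'' for ${\rm ld}(b/K)={\rm ild}(b/K)$ when $b$ is algebraic, so your write-up simply fills in the intended details along the same lines.

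One genuine technical point, however. Your key step is the identity $\tilde L_{n+1}=\sigma(L_n)$, justified by saying that ``$\sigma$ restricts to an automorphism of $K$''. This requires $K$ to be \emph{inversive}, i.e.\ $\sigma(K)=K$. For an arbitrary difference subfield one only has $\sigma(K)\subseteq K$, and then $\sigma(L_n)=\sigma(K)(\sigma(b),\ldots,\sigma^{n+1}(b))$ may be strictly smaller than $\tilde L_{n+1}=K(\sigma(b),\ldots,\sigma^{n+1}(b))$. In that situation the lemma itself can fail: for instance, take $K=\mathbb{Q}(t_0,\sqrt{t_1},\sqrt{t_2},\ldots)$ with $\sigma(t_i)=t_{i+1}$ and $\sigma(\sqrt{t_i})=\sqrt{t_{i+1}}$, and $b=\sqrt{t_0}$; then $\sigma^i(b)\in K$ for $i\geq 1$, so ${\rm ld}(b/K)=1$ while ${\rm ild}(b/K)=2$. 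So your cancellation $[L_n:K]=[\tilde L_{n+1}:K]$ is not available in general.

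This is a gap in your write-up rather than a divergence from the paper, which glosses over the same point. In the model-theoretic setting of the paper the difference fields that matter are of the form $acl(-)$ and hence inversive, so the issue does not affect the intended applications; but as stated your Step~2 needs the hypothesis $\sigma(K)=K$.
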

Hence, the number $\Delta(a/K)$ is an invariant of the extension
$K(a)^{alg}/K$. Furthermore, if the difference subfield $L$ of $\calu$
is free from $K(a)$ over $K$, then $\Delta(a/L)=\Delta(a/K)$. From this
one easily obtains the following:

\begin{Corollary}\label{corsigma} Let $a,b$ be tuples in $\calu$, with
  $a$ and $b$ of transcendence
  degree $1$ over $K$, and such that $\si(a)\in K(a)^{alg}$,
  $\si(b)\in K(b)^{alg}$. Assume that $\Delta(a/K) 
\neq  
  \Delta(b/K)$. Then $tp(a/K)$ and $tp(b/K)$ are orthogonal. 
\end{Corollary}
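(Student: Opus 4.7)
The plan is to argue by contradiction, using the invariance of $\Delta$ under non-forking extensions (noted just after Fact 5.5) together with its multiplicativity. Suppose $tp(a/K)$ and $tp(b/K)$ are not orthogonal. Since $ACFA$ is supersimple, there exist a difference field $L\supseteq K$ and realizations $a'\models tp(a/K)$, $b'\models tp(b/K)$ such that $tp(a'/L)$ and $tp(b'/L)$ are non-forking extensions of the given types, but $a'$ and $b'$ are not independent over $L$. By the invariance remark, $\Delta(a'/L)=\Delta(a/K)$ and $\Delta(b'/L)=\Delta(b/K)$, so these two numbers remain distinct.

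Next I would turn the dependence into a concrete algebraic relation. The hypothesis $\sigma(a')\in K(a')^{alg}$ gives a polynomial relation $R(a',\sigma(a'))=0$ over $K$; applying $\sigma^{-1}$ (which is defined on $\mathcal{U}$ and stabilizes $K$) yields $\sigma^{-1}(a')\in K(a')^{alg}$, and iterating shows that every $\sigma^{i}(a')$ lies in $L(a')^{alg}$. Hence $acl(La')=L(a')^{alg}$, a field of transcendence degree $1$ over $L$, and similarly $acl(Lb')=L(b')^{alg}$. Forking independence in $ACFA$ corresponds to algebraic independence of these algebraic closures over $acl(L)$, so non-independence means that the compositum $L(a')^{alg}\cdot L(b')^{alg}$ has transcendence degree strictly less than $2$ over $acl(L)$. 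Since each factor has transcendence degree $1$, the compositum has transcendence degree exactly $1$, and as both factors are algebraically closed subfields containing $acl(L)$, each must coincide with the whole compositum. In particular, $a'\in L(b')^{alg}$ and $b'\in L(a')^{alg}$.

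Finally I would apply the multiplicativity of $\Delta$ from Fact 5.5 in both directions. One has $\Delta(a',b'/L)=\Delta(a'/L)\cdot\Delta(b'/L(a',\sigma(a'),\sigma^{2}(a'),\ldots))$; since $b'\in L(a')^{alg}$, the element $b'$ is algebraic over $L(a',\sigma(a'),\ldots)$, so the second factor equals $1$ by the Fact, giving $\Delta(a',b'/L)=\Delta(a'/L)$. The symmetric computation yields $\Delta(a',b'/L)=\Delta(b'/L)$, forcing $\Delta(a'/L)=\Delta(b'/L)$ and contradicting the first paragraph. The main obstacle I anticipate is the step extracting the two symmetric containments $a'\in L(b')^{alg}$ and $b'\in L(a')^{alg}$ from forking dependence; this requires both the explicit description of $acl$ in $ACFA$ and careful use of the transcendence-degree-$1$ hypothesis on each side. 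The rest is routine bookkeeping with the limit-degree formulas of Fact 5.5.
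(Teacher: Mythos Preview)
Your argument is correct and is exactly the fleshing-out of what the paper intends: the paper simply records, immediately before the Corollary, that $\Delta(a/K)$ depends only on $K(a)^{alg}/K$ and is preserved under free base change, then states that the Corollary follows easily. Your proof supplies precisely those details---passing to a base $L$ witnessing non-orthogonality, using transcendence degree~$1$ to force $L(a')^{alg}=L(b')^{alg}$, and then invoking the invariance of $\Delta$ on $K(a)^{alg}$ to derive the contradiction---so there is nothing to add.
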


Recall that the main theorem of \cite{Pi2} can be expressed as: if $K$ is a differential subfield of  ${\mathcal U}\models DCF_{0}$ which is algebraically closed and has a solution $B\in GL_{n}(K)$ of every linear differential equation $\delta (Z) = AZ$ over $K$, and $G$ is a linear differential algebraic group over $K$ then every differential algebraic $PHS$ over $K$ for $G$ has a   $K$-point. 

So the following  gives a counterexample to the analogous statement in our set-up.  The result should translate easily into a counterexample in the more difference ring and difference scheme-based set-up.

\begin{Proposition}
There is a difference subfield $K$ of ${\mathcal U}$ which is algebraically closed and strongly $PV$-closed  but for some linear difference algebraic group $G$ and difference-algebraic $PHS$ $X$ for $G$ over $K$, $X(K) = \emptyset$. 
\end{Proposition}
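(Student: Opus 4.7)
The plan is to mimic Proposition 4.4 in the difference setting, replacing the heat subgroup of $\mathbb{G}_a$ by a subgroup of $\mathbb{G}_m$ with analogous orthogonality properties. Let $G\le\mathbb{G}_m$ be the $\emptyset$-definable linear difference algebraic group $\{y:\sigma(y)=y^2\}$, and let $Q:\mathbb{G}_m\to\mathbb{G}_m$ be the $\emptyset$-definable surjective homomorphism $Q(x)=\sigma(x)x^{-2}$, with kernel $G$. Pick $\alpha\in\mathcal{U}^{\times}$ such that $\{\sigma^i(\alpha):i\in\mathbb{Z}\}$ is algebraically independent over $\mathbb{Q}$, and set $d:=Q(\alpha)$, $K_0:=\mathbb{Q}\langle d\rangle^{\mathrm{alg}}$. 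Then $X_d:=Q^{-1}(d)=\{x:\sigma(x)=x^2d\}$ is a difference-algebraic $PHS$ for $G$ over $K_0$, with $\alpha\in X_d(\mathcal{U})$ transcendental over $K_0$.

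For any algebraically closed difference subfield $L\supseteq K_0$ and any $\beta\in X_d(\mathcal{U})$ transcendental over $L$, Fact 5.4 gives $\mathrm{ld}(\beta/L)=1$ (since $\sigma(\beta)=\beta^2 d\in L(\beta)$) and $\mathrm{ild}(\beta/L)=2$ (since $[L(\beta):L(\sigma(\beta))]=[L(\beta):L(\beta^2)]=2$), so $\Delta(\beta/L)=1/2$. On the other hand any solution $B\in GL_n(\mathcal{U})$ of a linear difference equation $\sigma(Z)=AZ$ with $A\in GL_n(L)$ satisfies both $\sigma(B)\in L(B)$ and $B=A^{-1}\sigma(B)\in L(\sigma(B))$, yielding $\Delta(B/L)=1$.

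Now construct an ascending chain $K_0\subseteq K_1\subseteq\cdots$ of algebraically closed difference subfields of $\mathcal{U}$ by letting $K_{n+1}$ be the algebraic closure of the difference field obtained from $K_n$ by adjoining, for each $A\in GL_m(K_n)$ and each $m\ge 1$, some solution $B\in GL_m(\mathcal{U})$ of $\sigma(Z)=AZ$ (such solutions exist by the $ACFA$ axioms applied to the variety $\{(X,Y)\in GL_m\times GL_m : Y=AX\}$). Set $K:=\bigcup_n K_n$; then $K$ is algebraically closed and strongly $PV$-closed. The key claim is $X_d(K)=\emptyset$, which we prove by induction on $n$: assume inductively $X_d(K_n)=\emptyset$, and suppose $\beta\in X_d(K_{n+1})$. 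Since $K_n$ is algebraically closed and $\beta\notin K_n$, $\beta$ is field-transcendental over $K_n$, giving $\Delta(\beta/K_n)=1/2$ as above. At the same time $\beta\in K_n(B)^{\mathrm{alg}}$ for some finite tuple $B$ of the chosen PV solutions; assembling the corresponding equations block-diagonally we get $\sigma(B)=A'B$ with $A'\in GL_N(K_n)$, whence $\Delta(B/L)=1$ for every difference subfield $L\supseteq K_n$, in particular $\Delta(B/K_n\langle\beta\rangle)=1$. By the invariance ``if $c\in K(a)^{\mathrm{alg}}$ then $\Delta(a,c/K)=\Delta(a/K)$'' from Fact 5.4 (with $a=B$, $c=\beta$), $\Delta(B,\beta/K_n)=\Delta(B/K_n)=1$; but tower multiplicativity gives $\Delta(B,\beta/K_n)=\Delta(\beta/K_n)\cdot\Delta(B/K_n\langle\beta\rangle)=\tfrac12\cdot 1=\tfrac12$, contradicting the previous equality.

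Hence $K$ is algebraically closed, strongly $PV$-closed, and $X_d(K)=\emptyset$, furnishing the required counterexample. The argument is notably clean in that no delicate independence choice of the PV solutions is needed: any solution of $\sigma(Z)=AZ$ automatically has $\Delta=1$ over every intermediate difference field containing $A$, so the contradiction in the $\Delta$-invariant is forced regardless of the particular choices made. The main conceptual step, which is where most of the thought goes, is to identify the subgroup $G=\{y:\sigma(y)=y^2\}$ of $\mathbb{G}_m$ as the right analog of the heat subgroup of $\mathbb{G}_a$: its generic type has $\Delta=1/2\ne 1$, making it orthogonal (in the sense of Corollary 5.5, extended to tuples via tower multiplicativity of $\Delta$) to everything produced by solving linear difference equations over a difference field containing $K_0$.
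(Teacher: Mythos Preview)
Your argument is essentially the paper's: same group $G=\{y:\sigma(y)=y^{2}\}\le\mathbb{G}_m$, same coset $X_d=\{x:\sigma(x)=x^{2}d\}$ for $d$ the image of a generic under $Q(x)=\sigma(x)x^{-2}$, same invariant $\Delta=\mathrm{ld}/\mathrm{ild}$ from Fact~5.4, and the same tower construction of a strongly $PV$-closed $K$. The one genuine difference is in how the incompatibility is cashed out. The paper invokes Corollary~\ref{corsigma}: $\Delta(b/K_0)=1/2\neq 1$, so $tp(b/K_0)$ is orthogonal to the generic of $\mathrm{Fix}(\sigma)$ and hence to every type realised in a $PV$ extension; orthogonality then propagates up the tower. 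You instead bypass orthogonality altogether and manufacture a direct numerical contradiction from the tower multiplicativity in Fact~5.4, computing $\Delta(B,\beta/K_n)$ in two orders to get $1$ versus $\tfrac12$. Your route is slightly more elementary and, as you note, does not require any care in how the $PV$ solutions $B$ are chosen (any solution of $\sigma(Z)=AZ$ over $K_n$ has $\Delta=1$ over every extension of $K_n$); the paper's route is quicker once Corollary~\ref{corsigma} is available and connects more visibly to the model-theoretic orthogonality picture used in Section~4.

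One small gap: your induction needs the base case $X_d(K_0)=\emptyset$, which you do not prove; you only assert that the particular element $\alpha$ is transcendental over $K_0$, not that every $\beta\in X_d$ is. The paper handles this by observing that any $\beta\in X_d$ realises the generic $1$-type over $\emptyset$ (since $d=\sigma(\beta)\beta^{-2}$ is generic and lies in $\mathrm{dcl}(\beta)$), whence all elements of $X_d$ have the same type over $\mathbb{Q}\langle d\rangle$ and therefore over $K_0$; so $X_d(K_0)=\emptyset$ as soon as $\alpha\notin K_0$. You should add a line to this effect.
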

\begin{proof} 
We will take for $G$ the subgroup of $({\mathcal U}^{*},\times) =
GL_{1}({\mathcal U})$ defined by $\sigma(x) = x^{2}$,  rewritten as
$\sigma(x)/x^{2} = 1$.  
\\
 Fix generic $a\in {\mathcal U}$, that is $a$ is difference
 transcendental over $\emptyset$.
 There is a unique such type, which is moreover stationary. Consider the
 $a$-definable subset $X$: $\sigma(x) = ax^{2}$ of $({\mathcal
   U}^{*},\times)$.  It is a coset of $G$, hence a linear difference
 algebraic $PHS$ for $G$ defined over $K_{0}$ where $K_{0}$ is the difference subfield generated by $a$. 
Now $X$ is clearly a coset of $G$ in $GL_{1}(\mathcal U)$. If $b,c\in
X$, then both realise the generic type over $\emptyset$, hence they have
the same type over $a$, and $tp(b/K_0)=tp(c/K_0)$. Moreover, if $b\in X$,
then $\Delta(b/K_0)=1/2\neq 1$, and by Corollary \ref{corsigma}, $tp(b/K_0)$ is
orthogonal to the generic type of $Fix(\si)$. Hence it is orthogonal to
any type which is realised in a PV extension of $K_0$. \\
One can construct an extension $M$ of $K_0$ which is algebraically
closed, and closed under PV extensions with the following property:
$M=\bigcup_{\alpha<\kappa} M_\alpha$, where $M_0=K_0$,  $M_\alpha=\bigcup_{\beta<\alpha} M_\beta$
when $\alpha$ is a limit ordinal, and $M_{\alpha+1}=M_\alpha(c)^{alg}$,
where $c$ is a fundamental solution of some linear difference equation
$\sigma(Z) =  AZ$ over $M_\alpha$. By the above and using induction on $\alpha$, if $b\in X$, then
$tp(b/K_0)$ and $tp(c/M_\alpha)$ are orthogonal, so that $b\notin M_{\alpha+1}$.  Hence
$X(M)=\emptyset$.

\end{proof}

\end{document}